\newtheorem{theorem}{Theorem}[section]
\newtheorem{proposition}[theorem]{Proposition}
\newtheorem{lemma}[theorem]{Lemma}
\newtheorem{corollary}[theorem]{Corollary}
\newtheorem{claim}[]{Claim}
\theoremstyle{definition}
\newtheorem{definition}[theorem]{Definition}
\newtheorem{remark}[theorem]{Remark}
\newtheorem{example}[theorem]{Example}
\newcommand{\mfa}{\mathfrak{a}}
\newcommand{\mfm}{\mathfrak{m}}
\newcommand{\mfp}{\mathfrak{p}}
\newcommand{\mcF}{\mathcal{F}}
\newcommand{\mcJ}{\mathcal{J}}
\newcommand{\mcL}{\mathcal{L}}
\newcommand{\bfa}{\mathbf{a}}
\newcommand{\bfx}{\mathbf{x}}
\newcommand{\kk}{\Bbbk}
\newcommand{\CC}{\mathbb{C}}
\newcommand{\NN}{\mathbb{N}}
\newcommand{\PP}{\mathbb{P}}
\newcommand{\ZZ}{\mathbb{Z}}
\newcommand{\Hom}{\operatorname{Hom}}
\newcommand{\Ext}{\operatorname{Ext}}
\newcommand{\height}{\operatorname{height}}
\newcommand{\supp}{\operatorname{supp}}
\newcommand{\spec}{\operatorname{spec}}
\renewcommand{\H}{H}
\newcommand{\tH}{\widetilde{H}}
\newcommand{\tC}{\widetilde{C}}
\newcommand{\lcm}{\operatorname{lcm}}
\author{Amin Nematbakhsh}
\title{Linear strands of edge ideals of multipartite uniform clutters}
\date{}
\begin{document}
\maketitle

\begin{abstract}
We construct the first linear strand of the minimal free resolutions of edge ideals of $d$-partite $d$-uniform clutters.
We show that the first linear strand of such ideals are supported on relative simplicial complexes.
In the case that the edge ideals of such clutters have linear resolutions, we give an explicit and surprisingly simple description of their minimal free resolutions, generalizing known resolutions for edge ideals of Ferrers graphs and hypergraphs and co-letterplace ideals.
As an application, we show that the Lyubeznik numbers that appear on the last column of the Lyubeznik table of the cover ideal of such clutters are Betti numbers of certain simplicial complexes.
Furthermore, we restate a characterization for edge ideals of $d$-partite $d$-uniform clutters which have linear resolutions based on the recent characterization of arithmetically Cohen-Macaulay sets of points in multiprojective spaces.
\end{abstract}

\section{Introduction}
\label{sec-intro}

Classification of minimal free resolutions of monomial ideals is one of the central open problems in combinatorial commutative algebra.
There exists a variety of methods to compute free resolutions of monomial ideals (e.g. Taylor complex or Lyubeznik Complex) but construction of minimal free resolutions remains a challenging problem.
There are only a few classes of monomial ideals for which an explicit minimal free resolution is known. The most celebrated examples are generic and Borel ideals.  
A monomial ideal in a polynomial ring $\kk[x_1,\ldots,x_n]$ is $\ZZ^n$-graded and the $\ZZ^n$-graded Betti numbers can be computed via different algebraic and combinatorial methods.
This provides us with a description of the terms in a minimal $\ZZ^n$-graded free resolution. The construction of maps in a minimal free resolution is still an open problem in general.

Let $I$ be a squarefree monomial ideal in a polynomial ring $R$, such that its Alexander dual contains a regular sequence of length $d$ consisting of squarefree monomials, where $d$ is the minimum degree of the generators of $I$.
We call such a regular sequence an {\it admissible sequence}.
In Theorem \ref{thm-main}, we give an explicit description of the first linear strand of the minimal free resolution of $I$.
The prototypes of ideals having admissible sequences are the edge ideals of $d$-partite $d$-uniform clutters.
In this article, we construct the first linear strand of such ideals based on the work of K.~Yanagawa on squarefree modules \cite{Yanagawa-01,Yanagawa-02}.
Yanagawa uses the graded structure of the $\Ext$ module $\Ext^i(R/I^A,\omega_R)$ to construct the the $i$-th linear strand of the minimal free resolution of $I$, where $I^A$ is the Alexander dual of $I$ and $\omega_R$ is the canonical module of $R$.
This method is essentially used in \cite{DAli-Floystad-Nematbakhsh-02} to construct the minimal free resolution of co-letterplace ideals.
The main challenge of this technique is to find a description for the $\Ext$ modules.
Here, we provide an easy description of $\Ext^d(R/I^A,\omega_R)$ using linkage.
Then by Yanagawa's construction, we get the first linear strand of the minimal free resolution of $I$.
In particular, if the ideal $I$ has a linear resolution then we get an explicit description of its minimal free resolution.

\subsection*{Linear resolutions}
Many classes of ideals for which their minimal linear free resolutions are constructed in the literature are either edge ideals of $d$-partite $d$-uniform clutters or are specializations (quotients by regular sequences of linear forms) of such ideals.
Up to our knowledge these classes include: Edge ideals of Ferrers graphs and hypergraphs \cite{Corso-Nagel-01,Nagel-Reiner-01}, strongly stable and squarefree strongly stable hypergraphs and their associated $d$-partite $d$-uniform hypergraphs \cite{Nagel-Reiner-01}, co-letterplace ideals \cite{Ene-Herzog-Mohammadi-01,DAli-Floystad-Nematbakhsh-02}, strongly stable ideals generated in a single degree \cite{DAli-Floystad-Nematbakhsh-02}, uniform face ideals \cite{CookII-01} and edge ideals of
cointerval d-hypergraphs \cite{Dochtermann-Engstrom-01}.
See also examples \ref{exa-01} to \ref{exa-03}.

\medskip
If we replace the matrices of the linear strand with their so called monomial matrices (see \cite[Chapter 4]{Miller-Sturmfels-01} for its definition) then we get the simplicial chain complex of a relative simplicial complex.
In analogy with the theory of cellular resolutions, we say that the first linear strand is supported on a relative simplicial complex, Theorem \ref{thm-main2}.

\subsection*{Complexes supported on relative simplicial complexes}

In the theory of cellular resolutions, one starts with labeling the vertices of a simplicial complex (or more generally a polyhedral cell complex) $X$ with monomials in a polynomial ring $R$.
The faces of $X$ are also labeled with the least common multiples of monomial labels of its vertices.
By using monomial matrices, the simplicial chain complex of such a labeled complex gives us a chain complex of free $R$-modules.
The theory studies conditions for which this complex of free $R$-modules supported on $X$ provides a free resolution of the ideal generated by monomial labels of vertices of $X$.
The construction of cellular resolutions can be broadened to include resolutions supported on relative simplicial complexes, \cite[Definition 5.2]{Miller-02}.
More precisely, 
one starts with a labeled simplicial complex (or more generally with a polyhedral cell complex) $X$ with vertex set $V$.
Let $R = \kk[V]$, i.e. a polynomial ring with vertices of $X$ as indeterminates.
Then we remove a subcomplex $Y$ of $X$ and seek conditions for which the complex of free $R$-modules supported on the relative simplicial complex $(X,Y)$ is a (minimal) free resolution (or the first linear strand of a minimal free resolution) of the ideal generated by monomials associated to faces of minimal dimension in $(X,Y)$.
We show that for the class of edge ideals of $d$-partite $d$-uniform clutters, this is the case and the first linear strand of their minimal free resolutions are supported on relative simplicial complexes.
Furthermore, we show that the vertices of these relative simplicial complexes are labeled with single variables, making them useful tools to give descriptions for the Betti numbers. 

\medskip
The Betti numbers of a monomial ideal with a cellular resolution can be computed using the cellular structure.
If a monomial ideal has a minimal free resolution supported on a relative simplicial complex $(X,Y)$ (in the sense above)
in which the vertices are labeled with distinct variables, 
then the sequence of (total) Betti numbers $\beta_0(I),\beta_1(I), \ldots$ is nothing but the $f$-vector of the relative simplicial complex $(X,Y)$.
Existence of linear strands supported on relative simplicial complexes has direct applications in computation of Lyubeznik numbers, especially when the subcomplex $Y$ is contractible or a homology sphere.

\subsection*{Applications to the computation of Lyubeznik numbers}
Lyubeznik numbers are homological invariants of a ring introduced by Gennady Lyubeznik in \cite{Lyubeznik-01}. They are defined as Bass numbers of certain local cohomology modules, see Section \ref{sec-Lyubeznik}. Similar to Betti numbers these invariants can be collected in a table called the {\it Lyubeznik table}.
For a squarefree monomial ideal $I$, the Lyubeznik numbers of the ring $R/I$ are closely related to the homology of linear strands of a minimal free resolution of $I^A$, where $I^A$ is the Alexander dual of $I$.
These invariants are usually very difficult to compute. Except for Cohen-Macaulay ideals for which their Lyubeznik tables are trivial, there are only a very few classes of ideals for which their Lyubeznik tables are known.
Finding families of examples for which their Lyubeznik tables can be described is one of the main questions in the theory of Lyubeznik numbers proposed by J. {\`A}lvarez-Montaner in the expository article \cite{AlvarezMontaner-01}.
Another intriguing question is what kind of topological information is provided by Lyubeznik numbers.
For example, if $X$ is a scheme of finite type over $\CC$ with an isolated singularity at the point $x\in X$ and $R$ is the local ring $\mathcal{O}_{X,x}$ with $\dim R \geq 2$, then every Lyubeznik number of $R$ equals the dimension of a certain singular cohomology group of $X$ with coefficients in $\CC$ and support on $x$ \cite{GarciaLopez-Sabbah-01}.
Later similar connections between Lyubeznik numbers and {\'e}tale cohomology in positive characteristic was given in \cite{Blickle-Bonda-01} and \cite{Blickle-01}. 
We show that for the cover ideals of $d$-partite $d$-uniform clutters the Lyubeznik numbers sitting on the last column of the Lyubeznik table are Betti numbers of certain simplicial complexes.
This partially answers a question in \cite[Section 7]{AlvarezMontaner-Vahidi-01} that whether there are cellular structures on the linear strands of a free resolution so one can give topological descriptions of the Lyubeznik numbers.
Moreover, when the edge ideals of such clutters have linear resolutions then we get a complete picture of the Lyubeznik table of their Alexander dual ideals.

\medskip
Suppose that $I$ is the edge ideal of a $d$-partite $d$-uniform clutter.
If $I$ has a linear resolution then obviously its first linear strand is its resolution and in Theorem \ref{thm-main} we get a description of its minimal free resolution. Furthermore, Theorem \ref{thm-Lyu} describes the full Lyubeznik table of its Alexander dual (since the only nontrivial column is the last one).
Therefore, it would be interesting to have a characterization for edge ideals of $d$-partite $d$-uniform clutters which have linear resolutions.
This characterization was achieved by G.~Favacchio, E.~Guardo and J.~Migliore while establishing a characterization of arithmetically Cohen-Macaulay sets of points in the multiprojective space $(\PP^{1})^{\times d}$. 

\subsection*{Configuration of points in the multiprojective space}
In \cite{Favacchio-Guardo-Migliore-01}, G.~Favacchio et al. introduced a correspondence between finite reduced sets of points in the multiprojective space $(\PP^1)^{\times d}$ and $d$-partite $d$-uniform clutters. 
They show that for any finite reduced set of points $X$ in $(\PP^1)^{\times d}$ one can associate a $d$-partite $d$-uniform clutter $C$. The associated $d$-partite $d$-uniform clutter has a linear resolution if and only if $X$ is arithmetically Cohen-Macaulay. For any $d$-partite $d$-uniform clutter $C$, there exists many sets of points in $(\PP^1)^{\times d}$ for which the associated clutter is $C$.
Therefore, their characterization of arithmetically Cohen-Macaulay sets of points in $(\PP^1)^{\times d}$ provides a characterization for $d$-partite $d$-uniform clutters with linear resolutions.
They show that a finite set of points is arithmetically Cohen-Macaulay if and only if it satisfies the so called $(\star_d)$-property. The $(\star_d)$-property provides a characterization for clutters for which their edge ideals have linear resolutions. Since this characterization is given in a different language and also since the result is not known to many people working in the field of combinatorial commutative algebra, we decided to include this characterization in an appendix section.
It follows from the Fr\"oberg's theorem that the edge ideal of a bipartite graph has a linear resolution if and only if it does not have any induced subgraph consisting of two disjoint edges.
We rephrase the characterization of $d$-partite $d$-uniform clutters for which their edge ideals have linear resolutions in Proposition \ref{pro-linearchar}. It is formulated in a setting that resembles the similar result on bipartite graphs.

\subsection*{Structure of the paper}
In Section \ref{sec-pre}, we recall the main terminology and tools that will play a central role in the statements and proofs of the subsequent sections.
In Section \ref{sec-main}, we introduce the notion of an admissible regular sequence. The linear strand of a squarefree monomial ideal which admits an admissible regular sequence is given in Theorem \ref{thm-main}.
In particular, Theorem \ref{thm-main} gives a description for the first linear strand of the minimal free resolutions of the class of edge ideals of $d$-partite $d$-uniform clutters as well as the class of flag ideals of posets.
We show in Theorem \ref{thm-main2} that the linear strand is supported on a relative simplicial complex.
Section \ref{sec-Lyubeznik} contains the applications of Theorem \ref{thm-main} and Theorem \ref{thm-main2} in computation of Lyubeznik numbers.
In Theorem \ref{thm-Lyu} we compute the last column of the Lyubeznik table of the cover ideal of a $d$-partite $d$-uniform clutter in terms of the Betti numbers of certain simplicial complexes (or equivalently the Betti numbers of certain squarefree monomial ideals by Hochster's formula).
The characterization of edge ideals of $d$-partite $d$-uniform clutters that have linear resolutions is given in the Appendix section. 

\subsection*{Acknowledgment}
The author would like to thank the referee for the valuable comments which helped to
improve the manuscript.
This research was supported by a grant from the Institute for Research in Fundamental Sciences (IPM), Tehran, Iran.

\section{Preliminaries}
\label{sec-pre}
In this section, we provide the terminology and preliminary materials that we need through the article.

\subsection{Monomial ideals and minimal free resolutions}

Let $I$ be a monomial ideal in a polynomial ring $R = \kk[x_1,\ldots,x_n]$.
We denote the unique set of minimal generators of $I$ by $G(I)$. We also denote the set of minimal generators of degree $d$ by $G_d(I)$.

For a subset $W=\{x_{i_1},\ldots,x_{i_k}\}$ of $\{x_1,\ldots,x_n\}$, we denote the product of elements of $W$ by $m_W$, i.e. $m_W = x_{i_1}\cdots x_{i_k}$.
The {\it support} of a monomial $m$, denoted by $\supp(m)$, is the set of variables that divide $m$.

Let $I$ be a squarefree monomial ideal.
The {\it Alexander dual} of $I$ is the ideal generated by all monomials $m$ such that $m$ has a nontrivial common divisor with all elements of $I$. We denote the Alexander dual of $I$ by $I^A$.

A minimal free resolution of a squarefree monomial ideal $I$ in $R=\kk[x_1,\ldots,x_n]$, is an exact sequence of free $R$-modules,
\[
\xymatrix{
F_\bullet: 0 \ar[r] & F_n \ar[r]^{d_n} & F_{n-1} \ar[r]^{d_{n-1}} &  \cdots \ar[r] & F_1 \ar[r]^{d_1} & F_0 \ar[r] & I \ar[r] & 0
}
\]
where $d_i(F_i) \subseteq (x_1,\ldots,x_n)F_{i-1}$, for $i=1,\ldots,n$.
Since a monomial ideal is $\ZZ^n$-graded, each $F_i$ is a direct sum of $R$ with degree shifts,
\[
F_i = \bigoplus_{\bfa \in \ZZ^n} R(-\bfa)^{\beta_{i,\bfa}}.
\]
The numbers $\beta_{i,\bfa}$ are invariants of the ideal $I$ and are called the {\it multigraded (or fine) Betti numbers} of $I$. The {\it graded (or coarse) Betti numbers} of $I$ are defined as the sum over multidegrees of the same degree,
\[
\beta_{i,j} = \sum_{\substack{\bfa \in \ZZ^n\\|\bfa| = j}} \beta_{i,\bfa}.
\]
where for $\bfa = (a_1,\ldots,a_n) \in \ZZ^n$,  $|\bfa| = a_1+\cdots+a_n$. 
The {$r$-th linear strand} of $I$ is the subcomplex of $F_\bullet$,
\[
\xymatrix{
F_\bullet^{<r>} : 0 \ar[r] & F^{<r>}_{n-r} \ar[r] & F^{<r>}_{n-r-1} \ar[r] & \cdots \ar[r] & F_1^{<r>} \ar[r] & F_0^{<r>} \ar[r]& 0 
}
\]
where, 
\[
F_i^{<r>}  = \bigoplus_{\substack{\bfa \in \ZZ^n\\|\bfa| = i + r}} R(-\bfa)^{\beta_{i,\bfa}}.
\]
If $r$ is the smallest integer for which the $r$-th linear strand is nontrivial then the $r$-th linear strand of $I$ is usually called the {\it first linear strand} of $I$. 

\subsection{Hypergraphs and clutters}
A {\it hypergraph} $C$ is a pair $(V(C),E(C))$, where $V(C)$ is a set of elements called {\it vertices}, and $E(C)$ is a set of nonempty subsets of $V(C)$ called {\it edges}. If no edge of the hypergraph $C$ is a subset of another edge of $C$, then $C$ is called a {\it clutter}. A clutter $C$ is called
\begin{enumerate}
\item {\it $d$-uniform} if all edges of $C$ have cardinality $d$,
\item {\it $d$-partite} if there is a partition of the vertex set $V(C)$, i.e. $V(C)=V_1 \cup \cdots \cup V_d$ such that for every edge $e$ of $G$, each vertex of $e$ belongs to exactly one $V_i$, $i=1,\ldots,d$.
The partition $V_1\cup \cdots\cup V_d$ of $V(C)$ is called a {$d$-partition} of the clutter $C$.
\end{enumerate}
In particular, a $2$-uniform clutter is a simple graph and a $2$-partite $2$-uniform clutter is a bipartite graph.

A subset $D\subseteq V(C)$ is called a {\it vertex cover} if it has a nonempty intersection with any edge of $C$. A {\it minimal vertex cover} is a vertex cover for which any proper subset is not a vertex cover.
A subset $A\subseteq V(C)$ is called an {\it independent set} if it does not contain any edge of $C$. A {\it maximal independent set} is an independent set which is not a proper subset of another independent set. A subset $D$ is a vertex cover if and only if its complement in $V(C)$ is an independent set.

There is a natural correspondence between clutters on a vertex set $V$ and square-free monomial ideals in the polynomial ring $\kk[V]$ with elements of $V$ as indeterminates.
For any clutter $C$, the corresponding ideal is the ideal $I(C) = (m_e~|~e\in E(C))$ called the {\it edge ideal} of $C$.
To any clutter $C$ one can assign yet another squarefree monomial ideal called the {\it cover ideal} of $C$. By definition it is the monomial ideal generated by all monomial $m_D$, where $D$ ranges over minimal vertex covers of $C$.
It is easy to show that the cover ideal of $C$ is equal to the Alexander dual of the edge ideal of $C$.

\subsection{Squarefree modules and Yanagawa construction}

Let $R = \kk[x_1,\ldots,x_n]$ be a polynomial ring with its natural $\NN^n$-grading.
Let $\epsilon_1,\ldots,\epsilon_n$ be the unit coordinate vectors of $\NN^n$.
For each multidegree $\bfa=(a_1,\ldots,a_n) \in \NN^n$, we may also call the monomial $\bfx^\bfa = x_1^{a_1}\cdots x_{n}^{a_n}$ a multidegree.
A finitely generated module $M$ over $R$ is called a {\it squarefree module} (or {\it positively $\mathbf{1}$-determined module}) if for each multidegree $\bfa\in \NN^n$ the multiplication map $M_\bfa \stackrel{x_i}{\longrightarrow} M_{\bfa+\epsilon_i}$ is an isomorphism whenever $a_i\geq 1$.
Squarefree modules were introduced by K.~Yanagawa in \cite{Yanagawa-01} and later generalized to positively $\bfa$-determined modules ($\bfa \in \NN^n$ and $a_i \geq 1$ for all $i$) by E.~Miller in \cite{Miller-01}.

The notion of Alexander duality for squarefree monomial ideals extends to squarefree modules.
For a squarefree module $M$, define another squarefree module $M^\ast$ as below.
\begin{enumerate}
\item For $\bfa \in \{0,1\}^n$, let $M^\ast_\bfa = \Hom_\kk(M_{\mathbf{1}-\bfa},\kk)$;
\item When $a_i = 0$, define the multiplication  map $M^\ast_\bfa \stackrel{x_i}{\longrightarrow} M^\ast_{\bfa+\epsilon_i}$ to be the dual of the multiplication map $M_{\mathbf{1}-\bfa-\epsilon_i} \stackrel{x_i}{\longrightarrow} M_{\mathbf{1}-\bfa}$ of $M$.
\item The rest of multigraded parts and the multiplication maps among them is defined by obvious extensions.
\end{enumerate}
The squarefree module $M^\ast$ defined above is called the {\it Alexander dual} of $M$.
If $I$ is a squarefree monomial ideal then $I^\ast = R/I^A$.

Any multidegree $\bfa\in \{0,1\}^n$ has the form $\bfa =\sum_{j\in J} \epsilon_j$ for some subset $J\subseteq [n]$.
Define its complement $\bfa^c$ to be the multidegree $\bfa^c=\sum_{j\in J^c} \epsilon_j$.
For a squarefree $R$-module $M$, K.~Yanagawa \cite{Yanagawa-01} introduced a complex of free $R$-modules denoted by $\mcL_\bullet(M)$ and defined with terms and differentials as follows (also see \cite{DAli-Floystad-Nematbakhsh-02}).
\begin{enumerate}
\item For $i\in \ZZ$,
\[\mcL_i(M) = \bigoplus_{\substack{\bfa\in \{0,1\}^n,\\ |\bfa^c| = i}} M_\bfa^\circ \otimes_\kk R\]
where $M_\bfa^\circ$ is the same as $M_\bfa$ but assumed to have multidegree $\bfa^c$.
\item For $i\in \ZZ$, the differential $d_i : \mcL_i(M) \to \mcL_{i+1}(M)$ is given by
\[
m^\circ \otimes s \mapsto \sum_{j\in \bfa^c} (-1)^{\alpha(j,\bfa)} (x_jm)^\circ \otimes x_js
\]
where $\alpha(j,\bfa) = \{i| i<j\}$.
\end{enumerate}

In the same article, it is shown that if $M$ is squarefree then all of its syzygy modules and the $\Ext$ modules $\Ext^i_R(M, \omega_R)$ are squarefree, as well.
Let $I$ be a square-free monomial ideal of $R$ and let $d$ be the smallest degree of generators of $I$.
For the particular case of $M = \Ext^i_R(R/I^A, R(-\mathbf{1}))$ the chain complex $\mcL_\bullet(M)[i]$ is isomorphic to the $i$-th linear strand of the resolution of the ideal $I$.
If in addition, the ideal $I$ has a $d$-linear resolution then by a result of J.~A.~Eagon and V.~Reiner \cite{Eagon-Reiner-01} $R/I^A$ is a Cohen--Macaulay ring.
In this case, the only non-vanishing Ext module is the top one which is the canonical module and the chain complex $\mcL_\bullet(\omega_{R/I^A})$ is the
linear resolution of the ideal $I$.

\subsection{Relative simplicial (co)homology}

Let $X$ be a simplicial complex and let $Y$ be a subcomplex of $X$.
The {\it relative simplicial complex} $(X,Y)$ is the collection of all faces of $X$ which are not in $Y$, i.e. $X\backslash Y$.
An orientation of $X$ induces an orientation of $Y$ and one can construct the {\it (reduced) relative simplicial chain complex} of $(X,Y)$ (or {\it (reduced) relative simplicial chain complex} of $X$ with respect to $Y$) in the same manner as the construction of the reduced chain complex of a simplicial complex.
We denote the (reduced) simplicial chain complex of $X$ by $\widetilde{C}_\bullet(X)$ and its (reduced) relative chain complex with respect to $Y$ by $\widetilde{C}_\bullet(X,Y)$.
The complex $\widetilde{C}_\bullet(X,Y)$ is isomorphic to the cokernel of the natural inclusion map
$\widetilde{C}_\bullet(Y) \rightarrow \widetilde{C}_\bullet(X)$.
In analogy with the reduced homology of a simplicial complex, the $i$-th homology of the complex $\widetilde{C}_\bullet(X,Y) \otimes \kk$ is called the $i$-th {\it (reduced) relative homology of the pair $(X,Y)$ with coefficient in $\kk$} and is denoted by $\tH_i((X,Y);\kk)$.
Since we always assume that $Y$ is nonempty, $\tH_i((X,Y);\kk) = H_i((X,Y);\kk)$ for all $i$.
The (reduced) relative cohomology is defined similar to reduced cohomology of a simplicial complex.
For detail we refer to standard books on algebraic topology like \cite{Munkres-01} or \cite{Hatcher-01}.

For any subset $W$ of the vertex set of $X$, the {\it restriction} of $X$ to $W$ is the simplicial complex
$X_{|W} = \{F\in X ~|~ F\subseteq W\}$. Similarly, the {\it restriction} of $(X,Y)$ to $W$ denoted by $(X,Y)_{|W}$ is the relative simplicial complex consisting of all faces of $(X,Y)$ which are contained in $W$, i.e. $(X,Y)_{|W} = (X_{|W},Y_{|W\cap \{\text{vertex set of } Y\}})$.
We call a simplicial complex {\it acyclic} if it is either empty or has zero reduced homology.

\section{Linear strands of edge ideals of $d$-partite $d$-uniform clutters}
\label{sec-main}
\begin{definition}
Let $V_1,\ldots,V_d$ be finite disjoint sets.
By the {\it complete $d$-partite clutter on the vertex set $V=V_1\cup\cdots\cup V_d$}, we mean the the $d$-partite $d$-uniform clutter on vertex set $V$ defined by the edge set
\[
\{\{v_1,\ldots,v_d\} ~|~ v_i \in V_i, i=1,\ldots,d\}
\]
and we denote it by $C({V_1,\ldots,V_d})$ or $C({n_1,\ldots,n_d})$, where $n_i = |V_i|$ for $i=1,\ldots,d$.
\end{definition}

Let $C$ be a clutter on the vertex set $V$ and let $R=\kk[V]$ be a polynomial ring with elements of $V$ as indeterminates.
Let $W$ be a subset of $V$.
The {\it restriction of $C$ to $W$} (or simply the {\it induced clutter on $W$} when $C$ is fixed) denoted by $C_{|W}$ is the clutter with vertex set $W$ and edge set $E(C_{|W})$ consisting of the edges $e \in E(C)$, such that $e\subseteq W$.
The {\it projection of $C$ on $W$} denoted by $C^W$ is the clutter with vertex set $W$ and edge set
\[
E(C^W) = \{e \cap W ~|~ e\in E(C), e\cap W \neq \emptyset\}.
\]
Suppose $C$ is a $d$-partite $d$-uniform clutter with $d$-partition $V_1\cup \cdots\cup V_d$.
If for some $J\subseteq [d]$, $W = \cup_{i\in J}V_i$, then we call the clutter $C^W$ a {\it ranked projection of rank $|J|$}.
The ranked projections are used to characterize edge ideals of $d$-partite $d$-uniform clutters with linear resolutions, see Proposition \ref{pro-linearchar}.

Let $C$ be a $d$-partite $d$-uniform clutter on the $d$-partition $V=V_1\cup\cdots\cup V_d$. Consider the complete $d$-partite clutter $C({V_1,\ldots,V_d})$. The complement of $E(C)$ in the edge set of $C({V_1,\ldots,V_d})$, defines a $d$-partite $d$-uniform clutter, which we denote by $C^c$ and call it the {\it $d$-partite complement} of $C$.

Before giving the main theorem, we recall a lemma in homological algebra which is a useful tool in the computation of Ext modules.

\begin{lemma}[Lemma 1.2.4 \cite{Bruns-Herzog-01}]
\label{thm-HomExt}
Let $M$ and $N$ be $R$-modules and $a_1,\ldots,a_n$ be a regular sequence on $N$. If $(a_1,\ldots,a_n)M=0$ then
\[
\Ext^n_R(M,N) = \Hom_R(M,N/(a_1,\ldots, a_n)N).
\]
\end{lemma}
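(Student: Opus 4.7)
The plan is induction on $n$, built on the short exact sequence $0 \to N \xrightarrow{a_1} N \to N/a_1 N \to 0$ and the long exact sequence of $\Ext_R(M,-)$. The single observation that makes everything collapse is that since $(a_1,\ldots,a_n)M = 0$, multiplication by each $a_j$ is the zero map on $\Ext^i_R(M,X)$ for every module $X$ and every $i$; this is immediate from functoriality, because the map is induced by $a_j \cdot \mathrm{id}_M$, which is zero on $M$.

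For the base case $n = 1$, I would apply $\Hom_R(M,-)$ to $0 \to N \xrightarrow{a_1} N \to N/a_1 N \to 0$. In the resulting long exact sequence every $a_1$-multiplication is zero by the observation above. Moreover, any $f \in \Hom_R(M,N)$ satisfies $a_1 f(m) = f(a_1 m) = 0$, so $f(m) \in (0 :_N a_1) = 0$ by the $N$-regularity of $a_1$; hence $\Hom_R(M,N) = 0$. The sequence therefore collapses to $\Hom_R(M, N/a_1 N) \cong \Ext^1_R(M,N)$, as desired.

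For the inductive step, set $\bar N = N/a_1 N$. Then $a_2,\ldots,a_n$ is regular on $\bar N$ and annihilates $M$, so the inductive hypothesis yields
\[
\Ext^{n-1}_R(M, \bar N) \;=\; \Hom_R\bigl(M,\, N/(a_1,\ldots,a_n)N\bigr).
\]
The long exact sequence applied to $0 \to N \xrightarrow{a_1} N \to \bar N \to 0$, combined with the vanishing of $a_1$ on every $\Ext$-term, breaks into short exact sequences
\[
0 \to \Ext^{j-1}_R(M,N) \to \Ext^{j-1}_R(M,\bar N) \to \Ext^j_R(M,N) \to 0.
\]
Taking $j = n$ reduces the lemma to the single remaining claim that $\Ext^{n-1}_R(M,N) = 0$.

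This final vanishing is the delicate point, and it is handled by invoking the inductive hypothesis a \emph{second} time on a different sub-sequence. Applied to the regular sequence $a_1,\ldots,a_{n-1}$ on $N$ (which still annihilates $M$), induction identifies $\Ext^{n-1}_R(M,N) = \Hom_R(M, N/(a_1,\ldots,a_{n-1})N)$; any such $f$ satisfies $a_n f(m) = f(a_n m) = 0$, and $a_n$ acts regularly on $N/(a_1,\ldots,a_{n-1})N$ by hypothesis, forcing $f = 0$. The main obstacle is therefore purely organizational — keeping track of which sub-regular-sequence is being invoked on which module at each step — but once the bookkeeping is in place the proof is a clean iteration of the long exact sequence.
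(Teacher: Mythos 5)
The paper cites this lemma from Bruns--Herzog without giving its own proof, so the comparison is really against the standard textbook argument, which your proof reproduces: induction on $n$ via the long exact sequence of $0 \to N \xrightarrow{a_1} N \to N/a_1N \to 0$, together with the key observation that $a_j$-multiplication on $\Ext^i_R(M,-)$ vanishes because it is induced by $a_j\cdot\mathrm{id}_M = 0$. Your write-up is correct; the only organizational variant is in killing $\Ext^{n-1}_R(M,N)$ --- Bruns--Herzog carries the vanishing $\Ext^i_R(M,N)=0$ for $i<n$ along as part of the inductive statement, whereas you keep the inductive hypothesis minimal and obtain the single needed vanishing by a second application of induction to $a_1,\ldots,a_{n-1}$ on $N$ followed by the regularity of $a_n$ on $N/(a_1,\ldots,a_{n-1})N$, which works equally well.
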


\begin{definition}
\label{def-admis}
Let $I$ be a squarefree monomial ideal.
Let $d(I)$ be the smallest degree of generators of $I$.
We call a regular sequence of monomials $a_1,\ldots,a_d$, an {\it admissible regular sequence} if
\begin{itemize}
\item $d = d(I)$;
\item the ideal $\mfa = <a_1,\ldots,a_d>$ is contained in $I^A$.
\end{itemize}
Note that a sequence of monomials $a_1,\ldots,a_d$ is a regular sequence if and only if the monomials are pairwise coprime.
Obviously, $\height(\mfa) = \mu(\mfa)$, where $\mu(\mfa)$ denotes the minimum number of generators of $\mfa$. Hence $\mfa$ is a complete intersection.
\end{definition}

The following examples present two large classes of squarefree monomial ideals that have admissible regular sequences.

\begin{example}
Let $C$ be a $d$-partite $d$-uniform clutter with a $d$-partition $V=V_1\cup\cdots\cup V_d$.
The sequence $m_{V_1},\ldots,m_{V_d}$ form an admissible regular sequence for the ideal $I(C)$.
\end{example}

\begin{example}
Let $P$ be a finite poset. Let $\mcF(P)$ be the facet ideal of the order complex of $P$. In other words,
\[
\mcF(P) = < m_W~|~W \text{ is a maximal chain in }P >.
\]
The ideal $\mcF(P)$ is a squarefree monomial ideal in the polynomial ring $\kk[P]$ and it is called the {\it flag ideal of $P$} in \cite{Nematbakhsh-02}.
A rank function for $P$ is a function $r :P\to \NN$ such that for any two elements $p,q\in P$ for which $q$ covers $p$, $r(q) = r(p)+1$.
A poset is called {\it graded} if it admits a rank function.
Now suppose $P$ is graded and let $r_P$ be the unique rank function of $P$ that maps all of the minimal elements of $P$ to $1$.
For any $p\in P$, $r_P(p)$ is called the rank of $p$ and we denote the set of elements of $P$ of rank $i$ with $P_i$.
For flag ideals, the integer $d=d(\mcF(P))$ is the smallest length of maximal chains of $P$.
Obviously, $m_{P_1},\ldots,m_{P_d}$ form an admissible regular sequence for $\mcF(P)$.
The algebraic and homological properties of flag ideals are studied in \cite{Nematbakhsh-02}.
\end{example}

Let $I$ be a squarefree monomial ideal that has an admissible regular sequence $a_1,\ldots,a_d$.
Let $C$ be the clutter associated with $I$.
Let $\mfa = <a_1,\ldots,a_d>$ and
for $i=1,\ldots,d$, let $\mathfrak{p}_i$ be the the prime monomial ideal generated by elements of $\supp(a_i)$.
Suppose $J$ is the Alexander dual of the ideal $(\mfa:I^A)$.
In the following lemma we provide a description for the generators of $J$.
Recall that two proper ideals $I$ and $J$ of height $d$ in a commutative Noetherian ring are said to be {\it linked} if there exist a regular sequence $a_1,\ldots,a_d$ contained in $I\cap J$ such that $(a_1,\ldots,a_d):I = J$ and $(a_1,\ldots,a_d):J=I$.

\begin{lemma}
\label{lem-asli}
The ideal $J$ is the monomial ideal generated by the monomials in $G(\mfa^A) \backslash G_d(I)$.
In other words, $J$ is the edge ideal of the $d$-partite $d$-uniform clutter $C'$ defined on the $d$-partition $\bigcup_{i=1}^d \supp(a_i)$ and with the edge set 
\[E(C') = E(C(\supp(a_1),\ldots,\supp(a_d))) \backslash \{e\in E(C)~|~|e| = d\}.\]
\end{lemma}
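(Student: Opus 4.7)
My plan is to compute $J$ directly by pushing the Alexander duality past the colon.  The key ingredient is the elementary identity
\[
(K_1 \cap K_2)^A = K_1^A + K_2^A
\]
for squarefree monomial ideals, which follows from the observation that a prime $\mfp_S$ (generated by a set of variables) contains $K_1 \cap K_2$ if and only if it contains $K_1$ or $K_2$, so the minimal primes of the intersection are the minimal elements of the union of the minimal prime sets of the $K_i$, and Alexander duality sends these to the minimal generators.  Combined with the standard colon formula $(\mfa : I^A) = \bigcap_{n \in G(I^A)} (\mfa : n)$, an induction gives
\[
J \;=\; (\mfa : I^A)^A \;=\; \sum_{n \in G(I^A)} (\mfa : n)^A.
\]

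Next I would compute each summand.  Writing $V_i = \supp(a_i)$ and $S_n = \supp(n)$, a direct monomial computation using that the $a_i$ are pairwise coprime yields $(\mfa : n) = (m_{V_1 \setminus S_n}, \ldots, m_{V_d \setminus S_n})$.  If $V_i \subseteq S_n$ for some $i$, then this colon equals $R$; note that by minimality of $n \in G(I^A)$ and the containment $m_{V_i} \in \mfa \subseteq I^A$ such an $n$ must equal $m_{V_i}$, and these terms contribute nothing to the sum since the Alexander dual of $R$ is $(0)$.  Otherwise, Alexander duality yields
\[
(\mfa : n)^A \;=\; \bigcap_{i=1}^{d} \mfp_{V_i \setminus S_n},
\]
which, because the sets $V_i \setminus S_n$ are pairwise disjoint, is the edge ideal of the complete $d$-partite clutter on these parts; its generators are the transversals $v_1 \cdots v_d$ with $v_i \in V_i \setminus S_n$.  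Summing over $n$, a transversal $m = v_1 \cdots v_d$ appears as a generator of $J$ precisely when there is a minimal vertex cover $D = S_n$ of $C$ with $\{v_1, \ldots, v_d\} \cap D = \emptyset$.

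To finish, I would identify this condition with $\{v_1, \ldots, v_d\} \notin E(C)$.  If $\{v_1, \ldots, v_d\}$ avoids some minimal vertex cover $D$, then it lies in the independent set $V \setminus D$ and is therefore itself independent; since every edge of $C$ has cardinality at least $d$, a set of size $d$ is independent if and only if it is not an edge.  Conversely, if $\{v_1, \ldots, v_d\}$ is not an edge then, again because edges have size at least $d$, it contains no edge and is independent; extending it to a maximal independent set $S$ produces the minimal vertex cover $V \setminus S$ disjoint from it.  The only point demanding care is the bookkeeping at the degenerate case $V_i \subseteq S_n$; once those terms are recognized as contributing trivially, the rest of the argument is a direct unwinding.
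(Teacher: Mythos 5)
Your proof is correct, and it takes a genuinely different route from the paper's. The paper first reduces to the case $I = I(C_{|W})$ by comparing minimal vertex covers of $C$ and $C_{|W}$, and then proves the identity $(\mfa:I^A) = I(C^c)^A$ for a $d$-partite $d$-uniform clutter $C$ by a double contradiction argument: a monomial in $K^A$ but not in $(\mfa:I^A)$ would produce a transversal disjoint from two vertex covers, and a monomial in $(\mfa:I^A)$ but not in $K^A$ would produce an edge $e$ of $C^c$ whose complement $D$ is a vertex cover of $C$ violating the colon condition. Along the way, the paper also proves the reverse inclusion $(\mfa:I(C^c)^A) = I^A$, establishing the full linkage, and then dispatches the final identification ``$I(C_{|W}^c)$ is the ideal generated by $G(\mfa^A)\setminus G_d(I)$'' as an easy observation. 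You instead compute $J$ directly by passing Alexander duality through the intersection $\bigcap_n (\mfa:n)$, compute each $(\mfa:n)^A = \bigcap_i \mfp_{V_i\setminus\supp(n)}$ explicitly as a complete $d$-partite edge ideal, and identify the resulting transversals by the independent-set/vertex-cover dictionary. Your route is more computational and avoids the reduction step entirely (it handles an arbitrary clutter $C$ in one pass, and your degenerate-case bookkeeping correctly quarantines the generators $n = a_i$); the paper's route is shorter once the reduction is in place and gives the linkage statement as a byproduct, which it reuses in the proof of Theorem \ref{thm-main}. Both are valid proofs of the lemma as stated.
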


\begin{proof}
By the definition of $J$,
\[
J^A= (\mfa : I^A) = \bigcap_{m\in G(I^A)} (\mfa:m).
\]
For any monomial $m\in G(I^A)$, we have that $m=m_D$ for a given minimal vertex cover $D$ of $C$.
Note that $(\mfa : m) = (\mfa: \gcd(m,m_W))$, where 
\[W =\bigcup_{i=1}^d \supp(a_i).\]
We show that the intersection above can be taken over all $m\in G(I(C_{|W}))$.
For a minimal vertex cover $D$ of $C$, $D\cap W$ is a vertex cover for $C_{|W}$, and we have 
$(\mfa:m_D) = (\mfa:m_{D\cap W})$, since $m_{D\cap W} = \gcd(m_D,m_W)$.
Conversely, suppose $D$ is a minimal vertex cover of $C_{|W}$.
Let $A$ be the complement of $D$ in $W$. The set $A$ is an independent set of $C$ and we extend it to a maximal independent set $\bar{A}$.
Denote its complement in $V$ by $\bar{D}$. Obviously, $\bar{D}$ is a minimal vertex cover of $C$ and $\bar{D} = D \cup V'$ where $V' \subseteq (V \backslash W)$.
Again $(\mfa:m_D) = (\mfa:m_{\bar{D}})$.
Therefore,
\[
(\mfa : I^A) = \bigcap_{m\in G(I^A)} (\mfa:m) = \bigcap_{m\in G(I(C_{|W}))} (\mfa:m) = (\mfa : I(C_{|W})).
\]
Note that $\mfa \subseteq I^A \subseteq I(C_{|W})^A$ and $C_{|W}$ is a $d$-partite $d$-uniform clutter.
The support of any minimal generator of $\mfa$ intersects any edge of $C_{|W}$.
Hence $C_{|W}$ is a $d$-partite $d$-uniform clutter and we can reduce to the case that $C$ is a $d$-partite $d$-uniform clutter.


\begin{claim} Let $C$ be a $d$-partite $d$-uniform clutter with $d$-partition $V_1\cup\cdots\cup V_d$ and let $\mfa = <m_{V_1},\ldots,m_{V_d}>$. We claim that $J=I(C^c)$.
Furthermore, $(\mfa:I^A) = J^A$ and $(\mfa:J^A)=I^A$, i.e. $I^A$ and $J^A$ are linked.
\end{claim}

\begin{proof}[Proof of claim]
Let $K = I(C^c)$. We show that $K^A = (\mfa:I^A)$ and $I^A = (\mfa:K^A)$. This will complete the proof.
Let $m$ be a monomial in $K^A$. Since $K^A$ is generated by minimal vertex covers of $I(C^c)$, $\supp(m)$ is a vertex cover of $C^c$.
If $m\notin (\mfa:I^A)$, then there exists a monomial $m'\in I^A$, such that $mm'\notin \mfa$.
This implies that for each $i$, $1\leq i\leq d$, $\exists v_i\in V_i$ such that $v_i \nmid mm'$.
The union of these vertices gives an edge $e$ of the complete clutter $C(V_1,\ldots,V_d)$.
By construction, $e\cap \supp(m) = \emptyset = e\cap \supp(m')$, which is a contradiction, since $e$ either belongs to $C$ or belongs to $C^c$.

Conversely, let $m\in (\mfa:I^A)$ be a monomial.
If $m\notin K^A$, then there exists an edge in $C^c$, such that $\supp(m)\cap e = \emptyset$.
Let $D$ be the complement of $e$ in $V(C)$. Note that $\supp(m) \subseteq D$. It is easy to show that $D$ is a vertex cover of $C$.
Therefore, we have $m_D \in I^A$. This implies that $mm_D \in \mfa$, which is a contradiction, since $\supp(mm_D) \subseteq D$.
The proof of the other assertion is similar.
\end{proof}

It is easy to show that $I(C_{|W}^c)$ is exactly the ideal generated by $G(\mfa^A)\backslash G_d(I)$.

\end{proof}

Before we mention the main theorem we fix some notation.
Let $I$ be a squarefree monomial ideal with an admissible regular sequence $a_1,\ldots,a_d$.
Let $\mfa = <a_1,\ldots,a_d>$ and $W= \bigcup_{i=1}^d \supp(a_i)$.
Let $C$ be the clutter associated with $I$. Recall that $C_{|W}$ denotes the induced clutter on $W$. We see in the proof of Lemma \ref{lem-asli} that $C_{|W}$ is a $d$-partite $d$-uniform clutter with $d$-partition $\bigcup_{i=1}^d \supp(a_i)$.
We assume that the elements of $V(C)$ are totally ordered by a fixed order relation $\prec$.
In the following theorem, by $C^c_{|W}$ we mean $(C_{|W})^c$.

\begin{theorem}
\label{thm-main}
Let $I$ be a squarefree monomial ideal with an admissible regular sequence $a_1,\ldots,a_d$.
Let $W$ and $C$ be as above.
The first linear strand of a minimal free resolution of $I$ is isomorphic to the complex $\mcL_\bullet(I(C_{|W}^c)^A/\mfa)[d]$ with terms
\begin{equation}
\label{eq-main1}
F_i = \bigoplus_{\substack{D\text{ is a vertex cover of }C_{|W}^c,\\|D| = n-i-d,\\m_D\notin \mfa}} S(-D)
\end{equation}
and differentials,
\[
e_D \mapsto \sum_{\substack{v \in D^c\\m_{D \cup \{v\}} \notin \mfa}} (-1)^{\alpha(v,D)} v e_{D\cup \{v\} }
\]
where $\alpha(v,D) = \{w\in D| w \prec v\}$.
\end{theorem}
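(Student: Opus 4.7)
The plan is to identify the first linear strand of $I$ with Yanagawa's complex $\mcL_\bullet(M)[d]$ for an explicitly describable squarefree module $M$. Since $d = d(I)$ is the minimum degree of generators of $I$, the first linear strand is the $d$-th linear strand, and by the result of Yanagawa recalled in Section \ref{sec-pre} this strand is isomorphic to $\mcL_\bullet(M)[d]$ with $M = \Ext^d_R(R/I^A, R(-\mathbf{1}))$. The task therefore reduces to computing $M$ explicitly and writing out the associated Yanagawa complex.

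First I would compute $M$. Admissibility gives $\mfa \subseteq I^A$, so $\mfa$ annihilates $R/I^A$; the monomials $a_1,\ldots,a_d$ are pairwise coprime, hence form a regular sequence on $R(-\mathbf{1})$. Lemma \ref{thm-HomExt} then yields
\[
M \;\cong\; \Hom_R\!\bigl(R/I^A,\; (R/\mfa)(-\mathbf{1})\bigr) \;\cong\; \bigl((\mfa:I^A)/\mfa\bigr)(-\mathbf{1}),
\]
and Lemma \ref{lem-asli} identifies $(\mfa:I^A)$ with $I(C_{|W}^c)^A$. Hence
\[
M \;\cong\; \bigl(I(C_{|W}^c)^A/\mfa\bigr)(-\mathbf{1}).
\]

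Next I would unpack Yanagawa's complex for the module $N := I(C_{|W}^c)^A/\mfa$. For $\bfa\in\{0,1\}^n$ the component $N_\bfa$ is one-dimensional exactly when $D=\supp(\bfa)$ is a vertex cover of $C_{|W}^c$ satisfying $m_D\notin\mfa$, and is zero otherwise. Feeding this description into the definition of $\mcL_\bullet(N)$, the summand indexed by such a $D$ contributes a free module placed in multidegree $V\setminus D$ of total degree $i+d$ when $|D| = n - i - d$; combining with the internal shift $(-\mathbf{1})$ and the cohomological shift $[d]$ moves this summand to homological position $i$ of the linear strand, producing exactly the terms in the statement. Yanagawa's differential $m^\circ\otimes s \mapsto \sum_{j\in \bfa^c}(-1)^{\alpha(j,\bfa)}(x_j m)^\circ\otimes x_j s$ translates directly into the stated rule $e_D\mapsto\sum_{v\in D^c}(-1)^{\alpha(v,D)}\,v\,e_{D\cup\{v\}}$; the restriction $m_{D\cup\{v\}}\notin\mfa$ is automatic because otherwise the target component $N_{D\cup\{v\}}$ vanishes (note that adding a vertex to a vertex cover is still a vertex cover, so the cover condition is preserved).

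The main obstacle is careful bookkeeping of the many gradings and shifts in play: the squarefree multigrading indexed by $\bfa \leftrightarrow D$, the complementary regrading $\bfa \leftrightarrow \bfa^c$ built into Yanagawa's complex (so that a component of $N$ supported in multidegree $D$ contributes a summand generated in multidegree $V\setminus D$), the internal shift $-\mathbf{1}$ coming from the Ext module, and the cohomological shift $[d]$ that places the strand in the correct homological positions. Once these conventions are aligned and the signs checked against Yanagawa's formula, the identification with the stated complex is routine.
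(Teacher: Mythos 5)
Your proof is correct and mirrors the paper's own argument: both identify the first linear strand with Yanagawa's complex $\mcL_\bullet(\Ext^d_R(R/I^A,R(-\mathbf{1})))[d]$, compute this Ext module as $(\mfa:I^A)/\mfa$ via Lemma~\ref{thm-HomExt}, replace $(\mfa:I^A)$ by $I(C_{|W}^c)^A$ using Lemma~\ref{lem-asli}, and then read off the terms and differentials from the definition of $\mcL_\bullet$. The only difference is organizational: the paper first reduces to the case where $I$ is the edge ideal of a $d$-partite $d$-uniform clutter before carrying out the Ext computation, whereas you carry it out for a general $I$ directly, which does not change the substance of the argument.
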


\begin{proof}
It follows from Lemma \ref{lem-asli} and its proof that the first linear strand of $I$ is the same as the first linear strand of the edge ideal $I(C_{|W})$.
Therefore, for the rest of the proof we assume that $I$ is the edge ideal of a $d$-partite $d$-uniform clutter $C$ with the $d$-partition $\cup_{i=1}^d \supp(a_i)$.

Let $J = I(C)^A$. By \cite[Corollary 4.2]{Yanagawa-01} the complex $\mcL(\Ext^d_R(R/J,R)[d]$ is isomorphic to the first linear strand of the minimal free resolution of $I(C)$.
For an edge $e\in E(C)$, let $\mfp_e = (e)$ be the monomial prime ideal generated by elements of $e$. By the definition of Alexander duality, we have $J = \bigcap_{e\in E(C)} \mfp_e$.
Let $K = I(C^c)^A$.
Recall that $K$ is generated by minimal vertex covers of $I(C^c)$.

Now we have
\[
K/\mfa \cong (\mfa:J)/\mfa \cong \Hom_R(R/J,R/\mfa) \cong \Ext_R^d(R/J,R(-\mathbf{1})).
\]
The first isomorphism follows from Lemma \ref{lem-asli} and the last one follows from Lemma \ref{thm-HomExt}.
Note that these are degree preserving isomorphisms of squarefree modules.
The first index $i$, for which $\mcL_i(K/\mfa)$ is nonzero is $i=d$.
Let $D$ be a subset of $V(C)$.
If $i<d$ and $|D^c| = i$, then $|D| > n-d$. This implies that there exist some $j$ such that $V_j \subseteq D$. Hence $m_D \in \mfa$.
For each $i\geq d$, the homogeneous monomials of degree $n-i$ in $K/\mfa$ form a basis of $\mcL_i(K/\mfa)$.
These monomials are precisely the vertex covers of $C^c$ of cardinality $n-i$, that does not contain any of the partition sets $V_j$, $j=1,\ldots,d$. Now it follows from the definition of the complex $\mcL_\bullet(K/\mfa)$ that the terms and differentials are exactly as given above.
\end{proof}

Let $X$ be a polyhedral cell complex with vertex set $V$.
Suppose that vertices of $X$ are labeled by monomials in the polynomial ring $R=\kk[V]$ with vertices of $X$ as indeterminates.
We also assume that each face of $X$ is labeled by the the least common multiple of the monomials associated to vertices in $F$.
The complex $X$ with such a labeling is called a {\it labeled cell complex}. 
Recall that a {\it monomial matrix} is a matrix $(\lambda_{p,q})$ of scalars in $\kk$ whose columns are labeled by monomials $m_p$ and whose rows are labeled by monomials $m_q$, such that $\lambda_{p,q} = 0$ unless $m_q | m_p$.
Let $\tC_\bullet = \tC_\bullet(X)\otimes_\ZZ \kk$ be the reduced chain complex of $X$ with coefficients in $\kk$.
Each map in $\tC_\bullet$ is given by a matrix of scalars for which each row and column corresponds to a face in $X$. One can think of these matrices as monomial matrices whose rows and columns are labeled by the  monomials associated to corresponding faces. The notion of a {\it monomial matrix} appears in the work of Ezra Miller and we refer to \cite[Chapter 4]{Miller-Sturmfels-01} for its definition.
Now if we replace each entry $\lambda_{p,q}$ in the monomial matrices by $\lambda \frac{m_p}{m_q}$ and we also change the source and target spaces to free $R$-modules accordingly, then we get a complex of free $R$-modules which is called the {\it cellular free complex supported on $X$} and is denoted by $\mcF_X$. We refer to \cite[Chapter 4]{Miller-Sturmfels-01} for more details and examples.
Let $I$ be the monomial ideal generated by monomials associated to vertices of $X$. If $\mcF_X$ is exact then we say that $I$ has a {\it cellular resolution supported on $X$}.
Cellular resolution were introduced by Bayer and B.~Sturmfels in \cite{Bayer-Sturmfels-01}.
It is known that any monomial ideal has a cellular resolution (the Taylor complex) but this resolution is not minimal in general. There are examples of monomial ideals that have no cellular minimal resolution \cite{Velasco-01}.
The construction of cellular resolutions can be broadened to relative structures. The notion of relative cellular resolution appears briefly at the end of Section 4.2 in \cite{Miller-01} which cites \cite{Miller-02}. In \cite{Miller-02}, E.~Miller studies the effect of Alexander duality on cellular resolutions and provides examples of resolutions supported on relative simplicial complexes.

In the sequel we show that the first linear strand of the resolution of an ideal which has an admissible regular sequence, is supported on a relative simplicial complex in the sense described below.
We start with a simplicial complex $X$ and we label each face $F$ with the monomial $m_F = \prod_{v\in F} v$.
The difference with the procedure above is that we label each vertex $v$ with the monomial $v$ instead of labeling it with an arbitrary monomial.
Then we remove a subcomplex $Y$ of $X$ to get the relative simplicial complex $(X,Y)$.
As before, we replace the matrices in the simplicial chain complex of $(X,Y)$ by monomial matrices to get a complex of free $R$-modules $\mcF_{(X,Y)}$.
Let $I$ be the ideal generated by monomial labels of faces of minimal dimension in $(X,Y)$.
If the complex $\mcF_{(X,Y)}$ is isomorphic to the first linear strand of the minimal free resolution of $I$ then we say that {\it the first linear strand is supported on a relative simplicial complex}.
More generally, one can seek examples where the construction above gives a free resolution of $I$ starting by a polyhedral cell complex.

Now let $I$ be a squarefree monomial ideal which has an admissible regular sequence.
In the terms of the resolution of $I$, instead of taking the sum over the vertex covers $D$ of $C_{|W}^c$ such that $m_D\notin \mfa$, one can take the sum over their complements, i.e. over the independent sets $A$ such that $\forall i$, $\gcd(A,a_i) \neq 1$.
Therefore, we can equivalently, define the first linear strand of $I$ by the terms
\begin{equation}
\label{eq-02}
F_i = \bigoplus_{\substack{A \text{ is an independent set of }C^c_{|W}\\|A| = i+d\\ \forall i, \gcd(a_i,m_A)\neq 1}} S(-A)
\end{equation}
and differentials
\[
e_A \mapsto \sum_{\substack{v\in A\\ \forall i, \gcd(a_i,m_{A\backslash \{v\}})\neq 1}} v e_{A\backslash v}.
\]
The set of independent sets of $C_{|W}^c$ is the Stanley-Reisner complex of $I(C_{|W}^c)$ by definition which we denote by $X(I)$.
Furthermore, the set of independent sets that does not satisfy the condition that for all $i=1,\ldots,d$, $\gcd(a_i,m_A) \neq 1$ forms a subcomplex of $X(I)$.
We denote this subcomplex by $Y(I)$.
If we replace the matrices in the resolution of $I$ by their monomial matrices (see \cite[Chapter 4]{Miller-Sturmfels-01} for its definition) then we get a chain complex of $\kk$-vector spaces which is exactly the chain complex 
\[
\tC_\bullet(X(I),Y(I))[d-1] \otimes_\ZZ \kk
\]
where $\tC_\bullet(X(I),Y(I))$ is the (reduced) relative simplicial chain complex of the relative simplicial complex $(X(I),Y(I))$.

We do not know if the first linear strand of $I$ is supported on a simplicial complex (in the sense of  \cite[Chapter 4]{Miller-Sturmfels-01}) but at least it is supported on a relative simplicial space.
We summarize the discussions above in the following theorem.

\begin{theorem}
\label{thm-main2}
Let $I$ be a squarefree monomial ideal that has an admissible regular sequence.
The first linear strand of $I$ is supported on the relative simplicial complex $(X(I),Y(I))$.
\end{theorem}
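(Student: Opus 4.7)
The plan is to recognize that Theorem \ref{thm-main2} is essentially a reformulation of Theorem \ref{thm-main} after passing from vertex covers to independent sets and from $R$-matrices to monomial matrices. So the proof will consist of (i) a change of indexing variable, (ii) a verification that certain combinatorial collections are actually simplicial complexes, and (iii) an identification of the resulting $\kk$-linear complex with a relative simplicial chain complex.

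First I would invoke Theorem \ref{thm-main} to write the first linear strand of $I$ explicitly with summands indexed by vertex covers $D$ of $C_{|W}^c$ satisfying $m_D \notin \mfa$. Then I would reindex the sum by replacing each such vertex cover $D$ by its complement $A = V(C_{|W}^c)\setminus D$ (together with, for bookkeeping purposes, the variables outside $W$ that are not in the strand at all). The condition that $D$ is a vertex cover of $C_{|W}^c$ translates to $A$ being an independent set of $C_{|W}^c$, while the condition $m_D\notin \mfa$ translates to $A$ meeting $\supp(a_i)$ for every $i$, i.e. $\gcd(a_i,m_A)\neq 1$ for each $i=1,\dots,d$. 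Carrying this reindexing through the differential of Theorem \ref{thm-main} produces the alternative description already displayed in the discussion preceding the theorem, with terms indexed by independent sets $A$ and differentials sending $e_A$ to $\sum_{v\in A} \pm\, v\, e_{A\setminus v}$ where the sum runs over those $v$ for which the condition $\gcd(a_i,m_{A\setminus\{v\}})\neq 1$ persists.

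Next I would identify the simplicial structures. The collection of independent sets of $C_{|W}^c$ is, by definition, the Stanley--Reisner complex of $I(C_{|W}^c)$, and in particular this collection is closed under taking subsets; I will call it $X(I)$. For $Y(I)$ I take the subcomplex consisting of those independent sets $A$ for which there exists some $i\in\{1,\dots,d\}$ with $\gcd(a_i,m_A)=1$; this is closed under subsets because removing vertices from $A$ cannot increase $\gcd(a_i,m_A)$, hence $Y(I)$ is indeed a subcomplex of $X(I)$. The relative simplicial complex $(X(I),Y(I))$ therefore consists precisely of those independent sets $A$ that intersect $\supp(a_i)$ for every $i$, which matches the indexing of our reindexed strand exactly.

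Finally I would replace the $R$-linear matrices of the strand by their monomial matrices in the sense of \cite[Chapter 4]{Miller-Sturmfels-01}. Because every face $F$ of $(X(I),Y(I))$ is labeled by $m_F=\prod_{v\in F}v$, the entry in the monomial matrix corresponding to a covering pair $A\supset A\setminus\{v\}$ is exactly the sign $(-1)^{\alpha(v,A)}$, since $m_A/m_{A\setminus\{v\}}=v$ is the variable appearing in the differential. Thus the underlying $\kk$-complex is canonically isomorphic to $C_\bullet(X(I),Y(I))[d-1]\otimes_\ZZ \kk$, which is the content of the theorem. The only mild subtlety — and the only point I expect to need genuine verification rather than bookkeeping — is the closure of $Y(I)$ under subsets; once that is in hand, the rest is a direct translation of Theorem \ref{thm-main} and the definition of the cellular/relative simplicial framework.
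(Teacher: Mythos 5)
Your proposal is correct and follows essentially the same route as the paper, which proves Theorem~\ref{thm-main2} simply by summarizing the reindexing discussion that precedes its statement: pass from vertex covers $D$ of $C^c_{|W}$ to their complements $A = D^c$, identify the resulting indexing sets with $X(I)$ and $Y(I)$, and replace the differentials by their monomial matrices to land on $C_\bullet(X(I),Y(I))[d-1]\otimes_{\ZZ}\kk$. The one point you add beyond the paper is the explicit check that $Y(I)$ is closed under taking subsets; the paper asserts this without comment, and your justification (that removing vertices from $A$ preserves the property of being disjoint from some $\supp(a_i)$) is the right one.
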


\begin{remark}
Let $C$ be a $d$-partite $d$-uniform clutter with vertex set $V$.
It follows from equation \ref{eq-02} that a multidegree $A \subseteq V$ appears in first linear strand of $I(C)$ if and only if $C_{|A}$ is a complete $d$-partite $d$-uniform clutter. This easily concludes \cite[Proposition 6]{DAli-Floystad-Nematbakhsh-01} where a description for multidegrees in the first linear strand of letterplace ideals are given, see also \cite[Proposition 5.8]{Nematbakhsh-02}.
\end{remark}

The simplicial complex $Y(I)$ has a simple structure. It is the simplicial complex that has the complements of $\supp(a_i)$ for $i=1,\ldots,d$ as its facets.
\[
Y(I) = < V(C_{|W}^c) \backslash \supp(a_i) ~|~ i=1,\ldots,d>.
\] 
It is easy to show that $Y(I)$ is the Stanley-Reisner complex of the Alexander dual of the ideal $\mfa$.
We show that it has the same homotopy type as a sphere.

\begin{lemma}
The complex $Y(I)$ is homotopy equivalent to a $(d-2)$-sphere.
\end{lemma}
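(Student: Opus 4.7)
The plan is to exhibit $Y(I)$ as a union of $d$ contractible subcomplexes whose intersections are well behaved, and then apply the Nerve Lemma. Write $V_i := \supp(a_i)$ for $i = 1, \ldots, d$; since the $a_i$ are pairwise coprime the $V_i$ are pairwise disjoint, and $W = V_1 \sqcup \cdots \sqcup V_d$. By the description recalled just above the statement, the facets of $Y(I)$ are the sets $W \setminus V_i$, so writing $\Delta_i$ for the full simplex on the vertex set $W \setminus V_i$ one has
\[
Y(I) \;=\; \Delta_1 \cup \Delta_2 \cup \cdots \cup \Delta_d,
\]
and each $\Delta_i$ is a simplex, hence contractible.

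The next step is to verify the hypotheses of the Nerve Lemma for this cover. For any nonempty $J \subseteq [d]$, the intersection $\bigcap_{i \in J} \Delta_i$ is the full simplex on the vertex set $W \setminus \bigcup_{i \in J} V_i$. This is a nonempty simplex, and so contractible, whenever $J \subsetneq [d]$; and it is the void complex when $J = [d]$, since the remaining vertex set is empty. Thus $\{\Delta_1, \ldots, \Delta_d\}$ is a good cover of $Y(I)$ by contractible subcomplexes.

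Applying the Nerve Lemma for covers of a CW/simplicial complex by contractible subcomplexes with contractible-or-empty intersections then gives that $Y(I)$ is homotopy equivalent to the nerve $\mathcal{N}$ of the cover. From the intersection computation above, the faces of $\mathcal{N}$ are exactly the nonempty proper subsets of $[d]$; in other words $\mathcal{N} = \partial \Delta^{d-1}$, the boundary of the standard $(d-1)$-simplex, which is homeomorphic to the $(d-2)$-sphere $S^{d-2}$, as desired.

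The only subtle point is keeping track of the $J = [d]$ intersection correctly, so that its emptiness excludes the top face from $\mathcal{N}$ and produces $\partial \Delta^{d-1}$ rather than all of $\Delta^{d-1}$; otherwise the argument is routine. In the degenerate case $d = 1$, the single facet is $\emptyset$ and $Y(I) = \{\emptyset\}$ is naturally the $(-1)$-sphere, so the statement still holds under the usual conventions.
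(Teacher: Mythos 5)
Your proof is correct and follows essentially the same route as the paper: cover $Y(I)$ by the $d$ simplices on the facet vertex sets, check that the intersections of proper subfamilies are nonempty simplices (hence contractible) while the full intersection is empty, and conclude via the Nerve Lemma that $Y(I)$ is homotopy equivalent to $\partial\Delta^{d-1}\simeq S^{d-2}$. Your write-up is actually slightly more careful than the paper's (which has the emptiness condition stated backwards, and does not treat the degenerate $d=1$ case), but the underlying argument is identical.
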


\begin{proof}
For simplicity assume that $I$ is the edge ideal of a $d$-partite $d$-uniform clutter $C$ with a $d$-partition $V=V_1\cup \cdots\cup V_d$. Now $Y(I)$ is the simplicial complex generated by the facets $F_i=V\backslash V_i$ for $i=1,\ldots,d$.
For any $J\subseteq [d]$, $\cap_{i\in J} F_i \neq \emptyset$ if and only if $J=[d]$.
Therefore, the nerve complex of $Y(I)$ is the boundary of a $(d-1)$-simplex, which implies that $Y(I)$ is homotopy equivalent to a $(d-2)$-sphere.
\end{proof}

\begin{corollary}
The first linear strand of the edge ideals of $d$-partite $d$-uniform clutters are characteristic free. Furthermore, all of the nonzero multigraded Betti numbers on the first linear strand are equal to $1$.
\end{corollary}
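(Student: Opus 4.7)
The plan is to read off both assertions directly from the explicit description of the first linear strand given in Theorem \ref{thm-main}. The corollary then becomes essentially a bookkeeping statement about that formula, with no further geometry or topology needed beyond what has already been built.

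First, I would look at the terms
\[
F_i = \bigoplus_{\substack{D\text{ is a vertex cover of }C_{|W}^c,\\|D| = n-i-d,\\ m_D\notin \mfa}} S(-D).
\]
The index set is a set of distinct subsets $D\subseteq V$, and distinct squarefree subsets give distinct multidegrees $m_D$. Hence, for any fixed multidegree $\bfa\in\{0,1\}^n$, at most one summand of $F_i$ has that multidegree. So if the displayed complex is already the first linear strand of the minimal free resolution (which is exactly what Theorem \ref{thm-main} asserts), then $\beta_{i,\bfa}\in\{0,1\}$ for every $\bfa$ contributing to this strand.

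Next, I would verify minimality to be sure no further reduction can shrink the ranks. The differential
\[
e_D \;\longmapsto\; \sum_{\substack{v\in D^c\\ m_{D\cup\{v\}}\notin \mfa}} (-1)^{\alpha(v,D)}\, v\, e_{D\cup\{v\}}
\]
has every nonzero entry of the form $\pm v$ for some variable $v$, so it sits in the irrelevant maximal ideal. Tensoring with the residue field therefore kills the differential, and $\dim_\kk (F_i\otimes_R\kk)_\bfa$ equals the multiplicity of the multidegree $\bfa$ in $F_i$, which we just saw is at most one. This establishes the second assertion.

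Finally, characteristic-freeness falls out of the same description: the indexing of $F_i$ is a purely combinatorial condition on the clutter $C_{|W}^c$ and the ideal $\mfa$, and the differentials have coefficients in $\{\pm v\mid v\in V\}$, which lift to integer coefficients. Thus the complex in Theorem \ref{thm-main} is obtained by base change from an analogous complex over $\ZZ[V]$, and the signs $\pm 1$ remain units in every characteristic, so minimality is preserved under any extension of scalars. Consequently the multigraded Betti numbers on the first linear strand depend only on the combinatorics of $C$, not on $\kk$. There is no real obstacle here; the only thing to watch is that the same combinatorial description remains valid and minimal after base change, which is immediate from the form of the coefficients.
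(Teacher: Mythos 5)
Your proof is correct and takes essentially the approach the paper intends (the paper states this corollary without a separate proof, as it is meant to be read off directly from Theorem \ref{thm-main}). Your two observations — that distinct index subsets $D$ give distinct multidegrees so each $\beta_{i,\bfa}$ on the strand is $0$ or $1$, and that the combinatorial description of the terms and the $\pm v$ coefficients of the differentials are independent of $\kk$ — are precisely what makes both assertions immediate.
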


\begin{remark}
Let $I$ be the edge ideal of a $d$-partite $d$-uniform clutter $C$.
If $I$ has a linear resolution then $R/I^A$ is a Cohen--Macaulay ring and by definition $\Ext^d_R(R/I^A,R(-\mathbf{1})) \cong I(C^c)^A/\mfa$ is its canonical module $\omega_{R/I^A}$. In this case the resolution in Theorem \ref{thm-main} is isomorphic to $\mcL_\bullet(\omega_{R/I^A})[d]$ and it gives the minimal free resolution of $I$.
\end{remark}

\begin{example} \label{exa-01}
Let $C$ be a $1$-partite $1$-uniform clutter and let $V_1 = \{x_1,\ldots,x_n\}$.
The ideal $I(C) = \mfm = <x_1,\ldots,x_n>$ is the homogeneous maximal ideal of $R = \kk[x_1,\ldots,x_n]$.
In this extreme case the complex in Theorem \ref{thm-main} is the Koszul complex.
\end{example}

\begin{example} \label{exa-02}
Let $P,Q$ be two finite posets.
Let $\Hom(P,Q)$ be the set of all order preserving maps.
For $\phi\in \Hom(P,Q)$, the graph of $\phi$,
\[
\Gamma\phi = \{(p,\phi(p)) ~|~ p\in P\}
\]
gives a monomial $m_{\Gamma\phi}$ in the polynomial ring $\kk[x_{P\times Q}]$. The ideal generated by all such monomials is called the {\it ideal of poset homomorphisms} of $P$ to $Q$ and is denoted by $L(P,Q)$, i.e $L(P,Q) = <m_{\Gamma\phi} | \phi \in \Hom(P,Q) >$.
The ideals of poset homomorphisms first appeared and studied in \cite{Floystad-Greve-Herzog-01} as a generalization of multichain ideals and their Alexander duals in \cite{Ene-Herzog-Mohammadi-01}.
Suppose $P=\{p_1,\ldots,p_d\}$. Let $C$ be the clutter with vertex set $P\times Q$ and edge set $\{\Gamma\phi~|~ \phi\in \Hom(P,Q)\}$. The clutter $C$ is $d$-partite and $d$-uniform with partition sets $V_i = \{(p_i,q)~|~q\in Q\}$. When $Q$ is a chain $[m]=\{1<\cdots<m\}$, it is shown in \cite{Ene-Herzog-Mohammadi-01}, that the ideal $L(P,[m])$ has linear quotients. Hence it has a linear resolution. 
In the same article, the authors construct the linear resolutions by iterrated mapping cones, see \cite{Herzog-Takayama-01}.
More generally, for any poset ideal $\mcJ \subseteq \Hom(P,[m])$, the ideal $L(\mcJ) = <m_{\Gamma\phi}~|~\phi\in \mcJ>$ has linear quotients as well. In \cite{DAli-Floystad-Nematbakhsh-02}, the linear resolution of these ideals are given using the Yanagawa construction. The canonical module is constructed differently in that article.
\end{example}

\begin{example} \label{exa-03}
The minimal free resolution of edge ideals of Ferrers graphs and hypergraphs are studied in \cite{Corso-Nagel-01,Nagel-Reiner-01}.
The Ferrers graphs are bipartite by definition, hence they are $2$-partite $2$-uniform clutters. A.~Corso and U.~Nagel in \cite{Corso-Nagel-01} show that Ferrers graphs are exactly the bipartite graphs for which their edge ideals have linear resolutions.
They explicitly construct the linear resolutions as well.
In \cite{Nagel-Reiner-01}, U.~Nagel and V.~Reiner construct cellular resolutions of edge ideals of $d$-uniform strongly stable and squarefree strongly stable clutters as specializations of resolutions of the corresponding $d$-uniform $d$-partite clutters.
We should mention that resolutions of strongly stable and squarefree strongly stable ideals in general are given in \cite{Eliahou-Kervaire-01,Aramova-Herzog-Hibi-01}.
\end{example}

\section{Applications in computation of Lyubeznik Numbers}
\label{sec-Lyubeznik}
Let $(R,\mfm)$ be a regular local ring of dimension $n$ containing a field $\kk$.
Let $M$ be an arbitrary $R$-module.
The $p$-th {\it Bass number} of $M$ with respect to the prime ideal $\mfp$ is the number
\[
\mu_p(\mfp,M) = \dim_{\kappa(\mfp)} \Ext^p_R(\kappa(\mfp),M_\mfp).
\]
Bass numbers are invariants of $M$ that describe the structure of the minimal injective resolution of $M$, see \cite{Bass-01}.
More precisely, if $I^\bullet$ is the minimal injective resolution of $M$, then the $p$-th term of $I^\bullet$ is
\[
I^p \cong \bigoplus_{\mfp\in \spec(R)} E_R(R/\mfp)^{\mu_p(\mfp,M)}.
\]
Let $I$ be an ideal of $R$.
The {\it $i$-th local cohomology} of $M$ with respect to $I$ is denoted by $H^i_I(M)$ and is defined to be
\[
H^i_I(M) = \underset{n}\varinjlim  \Ext^i_R(R/I^n,M).
\]
Finiteness of Bass numbers of local cohomology modules of $R$ in positive characteristic was shown by C.~Huneke and R.~Y.~Sharp \cite{Huneke-Sharp-01} and it was shown by G.~ Lyubeznik in zero characteristic \cite{Lyubeznik-01}.
Now let $I$ be an ideal of $R$ and let $A=R/I$.
The Bass numbers,
\[
\lambda_{p,i} (A) = \mu_p(\mfm,H^{n-i}_I(R))
\] 
are numerical invariants of $A$ and are called {\it Lyubeznik numbers}.
The Lyubeznik numbers were first introduced and studied in \cite{Lyubeznik-01}.
The Lyubeznik numbers enjoy the property that $\lambda_{p,i}(A) = 0$ for $p>i$ and $i>\dim A$.
Therefore, one can represent them in the form of an upper triangular matrix
\[
\Lambda(A) = \begin{pmatrix}
\lambda_{0,0} &\cdots & \lambda_{0,\dim A}\\
& \ddots & \vdots\\
&&\lambda_{\dim A,\dim A}
\end{pmatrix}
\]
called the {\it Lyubeznik Table}.
It is well-known that $\lambda_{\dim A,\dim A}$ is always nonzero.
We say that the Lyubeznik table is {\it trivial} if $\lambda_{\dim A,\dim A} =1$ and the rest of the entries of the table are zero.
The theory works just as well for a local graded ring $(R,\mathfrak{m})$ (e.g. a polynomial ring) and a homogeneous ideal $I$.
For a survey on Lyubeznik numbers and their generalizations see \cite{NunezBetancourt-Witt-Zhang-01}.
The relation between linear strands and Lyubeznik numbers has been
given in \cite{AlvarezMontaner-Vahidi-01, AlvarezMontaner-Yanagawa-01}.

Suppose $I$ is a squarefree monomial ideal in a polynomial ring $R=\kk[x_1,\ldots,x_n]$.
Let
\[F^{<r>}_\bullet(I): 
0 \to F^{<r>}_{n-r} \to \cdots \to F^{<r>}_1 \to F^{<r>}_0 \to 0
\]
be the $r$-th linear strand of $I$.
Following \cite{AlvarezMontaner-Vahidi-01}, by transposing the monomial matrices of $F^{<r>}_\bullet(I)$, we get a complex of $\kk$-vector spaces
\[
F^{<r>}_\bullet(I)^\ast: 0 \leftarrow K_0^{<r>} \leftarrow \cdots \leftarrow K^{<r>}_{n-r-1} \leftarrow K^{<r>}_{n-r} \leftarrow 0.
\]
In \cite{AlvarezMontaner-Vahidi-01}, J.~{\`A}lvarez Montaner and A.~Vahidi showed that
\[
\lambda_{p,n-r}(R/I^A) = \dim_\kk \H_p(\mcF_\bullet^{<r>}(I)^\ast).
\]

Now let $I$ be an ideal in a polynomial ring $R=\kk[x_1,\ldots,x_n]$ with an admissible regular sequence and let $d=d(I)$.
The Alexander dual of $I$ is a monomial ideal of height $d$. Therefore, $\dim (R/I^A)=n-d$ and the rightmost nonzero column of the Lyubeznik table is the $(n-d+1)$-th one.
It follows from \cite[Corollary 4.2]{AlvarezMontaner-Vahidi-01} and Theorem \ref{thm-main2} that the Lyubeznik numbers that lie on the last column of the Lyubeznik table of $I^A$ are Betti numbers of the relative simplicial complex $(X(I),Y(I))$ with coefficients in $\kk$.

For a squarefree monomial ideal $I \subseteq \kk[x_1,\ldots,x_n]$, the {\it Stanley-Reisner complex} $\Delta_I$ of $I$ is the simplicial complex on vertex set $\{x_1,\ldots,x_n\}$ consisting of the support of monomials not in $I$, i.e $\Delta_I = \{\supp(m) | m\notin I\}$.
The Hochster's formula states that the multigraded Betti numbers of $I$ can be computed from the reduced cohomology of subcomplexes of $\Delta_I$ with coefficients in $\kk$. More precisely, for $W\subseteq \{x_1,\ldots,x_n\}$
\[
\beta_{i,W}(I) = \dim_\kk \tH^{|W|-i-2}(\Delta_I|_W; \kk),
\]
where $\Delta_I|_W$ is the induced subcomplex of $\Delta_I$ on $W$.
We refer the reader to \cite[Chapter 8]{Herzog-Hibi-01} for more details.

\begin{theorem}
\label{thm-Lyu}
Let $I \subseteq R$ be a squarefree monomial ideal with an admissible regular sequence of length $d$.
We have
\[
\lambda_{p,n-d}(R/I^A) = \dim_\kk \tH^{n-p-1}((X(I),Y(I));\kk). 
\]
Furthermore, for $p<n-d$, 
\[
\lambda_{p,n-d}(R/I^A) = \dim_\kk \tH^{n-p-1}(X;\kk) = \beta_{p-1,n}(I(C^c)).
\]
\end{theorem}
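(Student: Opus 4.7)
The plan is to combine the Àlvarez Montaner--Vahidi identity (recalled just before the theorem)
\[
\lambda_{p,n-r}(R/I^A) = \dim_\kk \H_p\bigl(F_\bullet^{<r>}(I)^\ast\bigr),
\]
applied with $r = d$, with the relative-simplicial description of the first linear strand furnished by Theorem~\ref{thm-main2}. By that theorem, $F_p^{<d>}$ has rank equal to the number of $(p+d-1)$-dimensional faces of $(X(I),Y(I))$, and after erasing the $R$-action via monomial matrices its differentials are, up to sign, the boundary maps of the shifted relative simplicial chain complex $C_\bullet(X(I),Y(I))[d-1]\otimes_\ZZ\kk$. Transposing monomial matrices simultaneously dualizes the complex and reverses its indexing, so the term of $F_\bullet^{<d>}(I)^\ast$ in homological degree $p$ is $(F_{n-d-p}^{<d>})^\ast$, which under the above dictionary is the relative cochain group $C^{n-p-1}((X(I),Y(I));\kk)$, with the dualized differentials becoming the simplicial coboundaries. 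Computing homology yields
\[
\H_p\bigl(F_\bullet^{<d>}(I)^\ast\bigr)\;\cong\;\H^{n-p-1}\bigl((X(I),Y(I));\kk\bigr),
\]
and together with AM--V this gives the first assertion.

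For the second assertion I will use the long exact sequence of the pair,
\[
\cdots\to\tilde{\H}^{k-1}(Y(I);\kk)\to\H^k((X(I),Y(I));\kk)\to\tilde{\H}^k(X(I);\kk)\to\tilde{\H}^k(Y(I);\kk)\to\cdots,
\]
together with the lemma preceding the theorem stating that $Y(I)$ is homotopy equivalent to $S^{d-2}$. The latter concentrates the reduced cohomology of $Y(I)$ in degree $d-2$, so for $p<n-d$ we have $k:=n-p-1\geq d$, and both $\tilde{\H}^{k-1}(Y(I);\kk)$ and $\tilde{\H}^k(Y(I);\kk)$ vanish. The sequence then yields an isomorphism $\H^{n-p-1}((X(I),Y(I));\kk)\cong \tilde{\H}^{n-p-1}(X(I);\kk)$. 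Since $X(I)$ is the Stanley--Reisner complex of $I(C^c)$, Hochster's formula evaluated at the top multidegree (where the only contributing restriction of $X(I)$ is $X(I)$ itself) identifies this reduced cohomology with $\beta_{p-1,n}(I(C^c))$, giving the remaining equality.

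The main technical obstacle is the careful bookkeeping needed to reconcile three distinct indexings involved in the argument: the homological degree of the linear strand, the topological dimension of faces in $(X(I),Y(I))$ with the shift by $[d-1]$ coming from Theorem~\ref{thm-main2}, and the reversal of indices that accompanies transposition of monomial matrices in the Àlvarez Montaner--Vahidi formula. Keeping track of these reindexings is precisely what converts their ``$\H_p$'' into the relative cohomology ``$\H^{n-p-1}$'' appearing in the statement; once this dictionary is installed, both halves of the theorem are formal consequences of Theorem~\ref{thm-main2}, the long exact sequence of the pair, and Hochster's formula.
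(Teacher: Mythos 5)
Your argument follows the paper's proof essentially step by step: apply the Àlvarez Montaner--Vahidi identity with $r=d$, identify the transposed linear strand with the relative cochain complex of $(X(I),Y(I))$ via Theorem~\ref{thm-main2}, pass to the long exact sequence of the pair using the lemma that $Y(I)\simeq S^{d-2}$, and finish with Hochster's formula at the top multidegree. The index bookkeeping you spell out (including $K_p \cong (F^{<d>}_{n-d-p})^\ast$ and the resulting identification with $C^{n-p-1}$) matches the paper's $F_i^{<d>}(I)^\ast \cong \Hom_\ZZ(C_{n-i-1}(X,Y),\kk)$, so the two proofs coincide.
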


\begin{proof}
Let $X=X(I)$ and $Y=Y(I)$.
The complex $F_\bullet^{<d>}(I)^\ast$ is isomorphic to the $\kk$-dual of the chain complex $\tC_\bullet(X,Y)$ with coefficients in $\kk$ (up to some shift).
More precisely, for the $i$-th term of $F_\bullet^{<d>}(I)^\ast$, we have
\[
F_i^{<d>}(I)^\ast \cong \Hom_\ZZ(\tC_{n-i-1}(X,Y),\kk)
\]
Therefore, the dimension of the cohomology spaces of this relative simplicial complex with coefficients in $\kk$ gives the Lyubeznik numbers on the last column of the Lyubeznik table.
\[
\lambda_{p,n-d}(R/I^A) = \dim_\kk \H_p(F_\bullet^{<d>}(I)^\ast) = \dim_\kk \tH^{n-p-1}((X,Y);\kk). 
\]
This completes the proof of the first part.
The exact sequence of chain complexes
\[
0 \to \Hom_\ZZ(\tC_\bullet(X,Y),\kk)  \to \Hom_\ZZ(\tC_\bullet(X),\kk) \to \Hom_\ZZ(\tC_\bullet(Y),\kk)  \to 0
\]
induces the long exact sequence on reduced cohomology
\[
\cdots \to \tH^i((X,Y);\kk) \to \tH^i(X;\kk) \to \tH^i(Y;\kk) \to \tH^{i+1}((X,Y);\kk) \to \cdots
\]
Since the only nonvanishing reduced cohomology of $Y$ is $\tH^{d-2}(Y;\kk)$, we have
an exact sequence
\begin{align*}
0\to \tH^{d-2}((X,Y);\kk)  \to \tH^{d-2}(X;\kk) \to & \tH^{d-2}(Y;\kk) \to \\
  &  \tH^{d-1}((X,Y);\kk) \to \tH^{d-1}(X;\kk) \to 0
\end{align*}
and we also have
\[
\tH^i((X,Y);\kk) = \tH^i(X;\kk) \text{ for all } i>d-1 \text{ and } i<d-2.
\]
Since $I$ is squarefree, the projective dimension of $I$ is less than or equal to $n-d$.
It follows from the Hochster formula that for $i<d-2$, $\tH^i(X;\kk) = 0$.
For $p<n-d$,
\begin{align*}
\lambda_{p,n-d}(R/I^A) &= \dim \tH^{n-p-1}((X,Y);\kk)\\
&= \dim \tH^{n-p-1}(X;\kk)\\
& = \beta_{{p-1},\mathbf{1}}(I(C^c)).
\end{align*}
The last equality follows from the Hochster's formula.
\end{proof}

There are several ways to compute Lyubeznik numbers of squarefree monomial ideals (see \cite[Section 8]{NunezBetancourt-Witt-Zhang-01}).

For a squarefree monomial ideal $I$ of an $n$-dimensional polynomial ring $R$, Yanagawa gave the following formula for computation of Lyubeznik numbers (see \cite[Corollary 3.10]{Yanagawa-03}).
\[
\lambda_{p,i}(R/I) = \dim_\kk [\Ext^{n-p}_R(\Ext^{n-i}_R(R/I,R),R)]_{\mathbf{0}}
\]
where $[M]_{\mathbf{0}}$ denotes the degree $\mathbf{0}$ component of a $\ZZ^n$-graded module $M$.

This is the simplest method but definitely not the fastest to implement in a computer algebra system with predefined $\Ext$ functions such as Macaulay2 \cite{Grayson-Stillman-01}.
Also, it is not difficult to implement in Macaulay2 the approach of \cite{AlvarezMontaner-Vahidi-01} based on the homology of linear strands of the resolution of $I$ (see \cite{AlvarezMontaner-FernandezRamos-01} for a detailed example).
The computer algebra system Macaulay2 is used to compute the Lyubeznik table in the examples below.


\begin{example}
Consider a $3$-partite $3$-uniform clutter with $3$-partition $\{a_1,a_2\}\cup \{b_1,b_2\}\cup\{c_1,c_2\}$ given by the following edge set
\[
\{\{a_1,b_1,c_1\},\{a_2,b_2,c_2\}\}.
\]
Let $C$ be its $4$-partite complement.
Let $R=\kk[a_1,a_2,b_1,b_2,c_1,c_2]$.
The edge ideal of $C$ is
\[
I(C) = <a_2b_1c_1,a_1b_2c_1,a_2b_2c_1,a_1b_1c_2,a_2b_1c_2,a_1b_2c_2>.
\]
The Lyubeznik table of $I(C)^A$ is
\[\Lambda(R/I(C)^A) = 
\begin{pmatrix}0&
      0&
      1&
      0\\
      0&
      0&
      0&
      0\\
      0&
      0&
      0&
      1\\
      0&
      0&
      0&
      1\\
      \end{pmatrix}
\]
Theorem \ref{thm-Lyu} implies that since $\lambda_{2,3}(R/I(C)^A) = 1$, the Betti number $\beta_{1,6}(I(C^c))$ equals $1$ as can be seen from the resolution  of $I(C^c)$
\[
\xymatrix{
I(C^c) \longleftarrow R(-3)^2 \longleftarrow S(-6)^1.
}
\]
\end{example}

\begin{example}
Let $C$ be a $4$-partite $4$-uniform clutter with $4$-partition
\[
V(C) = \{a_1,a_2\}\cup\{b_1,b_2\}\cup\{c_1,c_2\}\cup\{d_1,d_2\},
\]
and edge ideal
\[
I(C) = <a_1b_1c_1d_1,a_1b_2c_1d_1,a_1b_1c_2d_1,a_1b_2c_1d_2,a_1b_1c_2d_2,a_1b_2c_2d_2>.
\]
in the polynomial ring $R=\kk[V(C)]$.
The Lyubeznik table of $I(C)^A$ is
\[
\Lambda(R/I(C)^A) =
\begin{pmatrix}0&
      0&
      0&
      0&
      0\\
      0&
      0&
      0&
      1&
      0\\
      0&
      0&
      0&
      0&
      0\\
      0&
      0&
      0&
      0&
      1\\
      0&
      0&
      0&
      0&
      1\\
      \end{pmatrix}.
\]
The ideal $I(C^c)$ is an ideal of projective dimension $3$ with a minimal free resolution of the form
\[
I(C^c) \longleftarrow R(-4)^{10} \longleftarrow R(-5)^{14} \oplus R(-7)^1 \longleftarrow
R(-6)^6 \oplus R(-8)^1 \longleftarrow R(-7)^1.
\]
One can easily see that $\lambda_{3,4}(R/I(C)^A) = \beta_{2,8}(I(C^c)) = 1$.
\end{example}

\begin{example}
Consider the monomial ideal
\[
I = <a_1b_1,a_2b_1,a_2b_2,a_3b_2,a_2b_3,a_4b_3,a_1b_4,a_3b_4,a_4b_4>.
\]
The ideal $I$ is the edge ideal of a $2$-partite $2$-uniform clutter $C$ with $2$-partition $\{a_1,\ldots,a_4\}\cup \{b_1,\ldots,b_4\}$.
Let $R=\kk[a_1,\ldots,a_4,b_1,\ldots,b_4]$.
We have
\[
\Lambda(R/I^A) =
\begin{pmatrix}0&
       0&
       0&
       0&
       0&
       0&
       0\\
       0&
       0&
       0&
       0&
       0&
       0&
       0\\
       0&
       0&
       0&
       0&
       0&
       0&
       0\\
       0&
       0&
       0&
       0&
       0&
       2&
       0\\
       0&
       0&
       0&
       0&
       0&
       0&
       0\\
       0&
       0&
       0&
       0&
       0&
       0&
       2\\
       0&
       0&
       0&
       0&
       0&
       0&
       1\\
       \end{pmatrix}.
\]
The edge ideal of $C^c$ is the ideal
\[
I(C^c) =  <a_3b_1,a_4b_1,a_1b_2,a_4b_2,a_1b_3,a_3b_3,a_2b_4>
\]
with a minimal free resolution
\[
I(C^c) \longleftarrow R(-2)^7 \longleftarrow \begin{matrix} R(-3)^6\\ \bigoplus\\  R(-4)^9 \end{matrix} \longleftarrow \begin{matrix} R(-5)^{12} \\ \bigoplus \\ R(-6)^3 \end{matrix} \longleftarrow \begin{matrix} R(-6)^2 \\ \bigoplus \\ R(-7)^6 \end{matrix} \longleftarrow R(-8)^2.
\]

We can see that $\lambda_{5,6}(R/I^A) = \beta_{4,8}(I(C^c)) = 2$.
\end{example}

In \cite{AlvarezMontaner-Vahidi-01}, it is said that one may think of the Lyubeznik numbers of a squarefree monomial $I$ as a measure of the acyclicity of the $r$-linear strand of the Alexander dual $I^A$.
For sure their vanishing is a necessary condition for acyclicity of the linear strands but the following example shows that even for edge ideals of $d$-partite $d$-uniform clutters it is not enough.

\begin{example}
Let $I$ be the ideal
\begin{align*}
I=&  <a_2b_1c_1d_1,a_1b_2c_1d_1,a_2b_2c_1d_1,a_1b_1c_2d_1,a_2b_1c_2d_1,a_1b_2c_2d_1,a_2b_2c_2d_1,\\
  &  a_1b_1c_1d_2,a_2b_1c_1d_2,a_1b_2c_1d_2,
       a_2b_2c_1d_2,a_1b_1c_2d_2,a_2b_1c_2d_2,a_2b_2c_2d_2>
\end{align*}
The Betti diagram of $I$ is
\[\beta(I)=
\bordermatrix{
 & 0 & 1&2 & 3\cr
4& 14&24&12&1 \cr
5& \text{.}&\text{.}&1&1 \cr
}
\]
and the Alexander dual of $I$ is the ideal
\[
I^A=<a_1a_2,b_1b_2,c_1c_2,d_1d_2,a_2b_1c_1d_1,a_2b_2c_2d_2>
\]
An easy computation shows that Lyubeznik table of $I^A$ is trivial. 

The minimal free resolution of $I$ in multidegree $\mathbf{1} = (1,\ldots,1)$, i.e. the multidegree $m=a_1a_2b_1b_2c_1c_2d_1d_2$ has nontrivial homology only in homological degree $0$. But in multidegree $\frac{m}{a_2}$ it has nontrivial homology in degrees $0$ and $1$.

\end{example}

\section{Appendix}
\label{sec-appendix}
Here we give a characterization for edge ideals of $d$-partite $d$-uniform clutters with a linear resolution which is practically given in \cite{Favacchio-Guardo-Migliore-01} but is written in a different language.
In \cite{Favacchio-Guardo-Migliore-01}, the authors give a characterization for arithmetically Cohen-Macaulay sets of points in the multiprojective space $(\PP^1)^{\times d}=\PP^1\times\cdots\times \PP^1$ which is equivalent to characterization of edge ideals of $d$-partite $d$-uniform clutters with a linear resolution.
Unlike a finite set of points in a projective space which is always arithmetically Cohen-Macaulay, a finite set of points in a multiprojective space is not necessarily arithmetically Cohen-Macaulay.
There are many characterization for arithmetically Cohen-Macaulay sets of points in $\PP^1\times\PP^1$, 
see \cite[Chapter 4]{Guardo-VanTuyl-01}.
Recently, Favacchio et. al. gave a characterization of arithmetically Cohen-Macaulay sets of points in $(\PP^1)^{\times d}$ for $d\geq 2$ \cite{Favacchio-Guardo-Migliore-01}.
The characterization of arithmetically Cohen-Macaulay sets of points in the general case of $\PP^{d_1}\times\cdots\times \PP^{d_n}$ is still an open problem.

Let $\epsilon_1,\ldots, \epsilon_d$ be the unit vectors of $\ZZ^d$.
Let $R=\kk[x_0^1,x_1^1,x_0^2,x_1^2,\ldots,x_0^d,x_1^d]$ be a polynomial ring of dimension $2d$. The polynomial ring $R$ is $\ZZ^d$-graded by letting $\deg x_0^i = \deg x_1^i =\epsilon_i$, for $i=1,\ldots,d$.
Let 
\[P = ([a^1_0,a^1_1],[a^2_0,a^2_1],\ldots,[a^d_0,a^d_1])\]
be a point in $(\PP^1)^{\times d}$.
We will write $P$ as $[a^1_0,a^1_1]\times [a^2_0,a^2_1]\times\ldots\times[a^d_0,a^d_1]$.
For $i=1,\ldots,d$, let $A^i = a_1^i x_0^i - a_0^i x_1^i$ be the  linear form defining the point $[a_0^i,a_1^i]$ in $\PP^1$.
Note that $A^i$ defines a hyperplane in $\PP^1$.
The defining ideal of $P$ in $R$ is the multihomogeneous ideal $I_P = <A^1,\ldots,A^d>$.
We first describe how finite sets of points in $(\PP^1)^{\times d}$ correspond to $d$-partite $d$-uniform clutters. Then we recall the characterization of arithmetically Cohen-Macaulay sets of points in $(\PP^1)^{\times d}$ given in \cite{Favacchio-Guardo-Migliore-01}. In Proposition \ref{pro-linearchar}, we provide the characterization of $d$-partite $d$-uniform clutters with a linear resolution.



Let $X = \{P_1,\ldots,P_r\}$ be a finite set of points in $(\PP^1)^{\times d}$.
For a linear form $A$ we denote its corresponding hyperplane by $H_A$.
For $i$, $1\leq i\leq d$, let $r_i$ be the number of linear forms $A$ of degree $\epsilon_i$ such that $H_A \cap X \neq \emptyset$. We shall denote these linear forms by $A^i_1,\ldots,A^i_{r_i}$. 
For any point $P_j$ of $X$, $1\leq j\leq r$, we have  $I_{P_j} = <A^1_{j_1},\ldots,A^d_{j_d}>$
where $1\leq j_i \leq r_i$ for $i=1,\ldots,d$.
The defining ideal of $X$ is the multihomogeneous ideal $I_X= I_{P_1}\cap \cdots\cap I_{P_r}$.
The ideal $I_X$ also defines a union of linear varieties in $\PP^{2d-1}$.

Now let $C$ be a $d$-partite $d$-uniform clutter with vertex set 
\[
\{a^1_1,\ldots,a^1_{r_1}\} \cup \{a^2_1,\ldots, a^2_{r_d}\} \cup\cdots\cup \{a^d_1,\ldots
,a^d_{r_d}\}.
\]
Each vertex $a^i_j$ corresponds to a linear form $A^i_j$ defined above.
The edge set of $C$ is the union of sets $\{a^1_{j_1},\ldots,a^d_{j_d}\}$ where $a^1_{j_1},\ldots,a^d_{j_d}$ correspond to the ideal $<A^1_{j_1},\ldots,A^d_{j_d}>$ defining the component $P_j$ of $X$.
Let $S = \kk[a^i_j ~|~ i=1,\ldots,d,j=1,\ldots,r_i]$ be a polynomial ring.
Let $J$ be the Alexander dual of the edge ideal $I(C)$ of $C$ which is a monomial ideal in $S$. We consider this ideal as an ideal in $T=S[x_0^1,x_1^1,\ldots,x^d_0,x^d_1]$ and denote it by $\bar{J}$.
Let $L$ be the ideal generated by linear forms $a^i_j - A^i_j$ for $i=1,\ldots,d$ and $j=1,\ldots,r_i$.
Obviously, $R/I_X \cong T/(\bar{J}+L)$.
In \cite{Favacchio-Guardo-Migliore-01}, the authors imply that since $R/I_X$ and $T/\bar{J}$ both have height $d$, we can view the addition of each linear form in $L$ as a proper hyperplane section.

More precisely, it is easy to show that the linear forms in $L$ along with the forms $x_0^1,x_1^1,\ldots,x^d_0,x^d_1$ provide a basis for $T_1$.
Let $f:T\to T$ be the change of coordinates corresponding to this basis, which means that for $1\leq i\leq d$ and $1\leq j\leq r_i$, $f$ maps $a^i_j$ to $a^i_j - A^i_j$ and for $1\leq i\leq d$ and $1\leq j\leq 2$, it maps $x^i_j$ to itself.
We have $f^{-1}(L) = <a^i_j~|~ 1\leq i\leq d, 1\leq j\leq r_i>$ and 
\[
f^{-1}(\bar{J}) = \bigcap <a^1_{j_1} + A^1_{j_1}, \ldots, a^d_{j_d} + A^d_{j_d}>.
\]
where the intersection is over components of $X$.
Now it is easy to show that the sequence of variables 
\[
a^1_1,\ldots,a^1_{r_1},a^2_1,\ldots, a^2_{r_d},\ldots, a^d_{r_d}
\]
form a $T/f^{-1}(\bar{J})$-sequence.
This implies that the linear forms in $L$ form a $T/\bar{J}$-sequence.
Therefore, $R/I_X$ is Cohen-Macaulay if and only if $S/J$ is Cohen-Macaulay. It follows from a result of Eagon and Reiner \cite[Theorem 3]{Eagon-Reiner-01} that $X$ is arithmetically Cohen-Macaulay if and only if $I(C)$ has a linear resolution.

\begin{example}
Let $C$ be a clutter with the following edge set
\[
E(C) = \{
\{a_1,b_1,c_1\},\{a_1, b_1,c_2\},\{a_2,b_2,c_2\}
\}
\]
Now let $X$ be the union of $3$ points 
\[\{ [1,1]\times[1,1]\times[1,1],[1,1]\times[1,1]\times[2,1],[2,1]\times[2,1]\times[2,1]\}\]
in $\PP^1\times\PP^1\times \PP^1$.
We have
\begin{align*}
I_X = <x^1_0 - x^1_1, x^2_0-x^2_1, x^3_0 - x^3_1> & \cap
 <x^1_0 - x^1_1, x^2_0-x^2_1, x^3_0 - 2 x^3_1>\\
& \cap <x^1_0 - 2 x^1_1, x^2_0- 2 x^2_1, x^3_0 -2 x^3_1>
\end{align*}
In this example,
\[
L = <a_1 - x^1_0 - x^1_1, a_2 - x^1_0 - 2 x^1_1, b_1 - x^2_0 - x^2_1, \ldots,c_2 - x^3_0 - 2 x^3_1>.
\]
This forms a regular sequence on the ideal
\[
\bar{J} = <a_1,b_1,c_1>\cap <a_1,b_1,c_2> \cap <a_2,b_2,c_2>
\] 
It is easy to see that the Alexander dual of the monomial ideal $\bar{J}$ is precisely the edge ideal of the clutter $C$.

\end{example}

Let $P$ and $Q$ be two points in $(\PP^1)^{\times d}$.
Following \cite{Favacchio-Guardo-Migliore-01}, we denote by $Y_{P,Q}$ a height $d$ multihomogeneous complete intersection of least degree containing $P$ and $Q$, for which each minimal generator is a product of at most two hyperplanes of the same multidegree.
 
The main result of \cite{Favacchio-Guardo-Migliore-01} states that a finite set of points $X$ in $(\PP^1)^{\times d}$ is arithmetically Cohen-Macaulay if and only if it has the {\it $(\star_d)$-property} defined as follows: For any integer $d'$, $2\leq d'\leq d$, there do not exist two points $P$ and $Q$ in $(\PP^1)^{\times d}$ satisfying either of the following:
\begin{enumerate}
\item $P,Q\in X$ such that the ideal defining $Y_{P,Q}$ has exactly $d'$ minimal generators of degree 2 and $X\cap Y_{P,Q} = \{P,Q\}$;
\item $P,Q\notin X$ such that the ideal defining $Y_{P,Q}$ has exactly $d'$ minimal generators of degree 2 and $X\cap Y_{P,Q} = Y_{P,Q} \backslash \{P,Q\}$.
\end{enumerate}

Recall that the Fr\"oberg's theorem states that the edge ideal of a simple graph has a linear resolution if and only if its complement is chordal.
It easily follows from the Fr\"oberg's theorem that the edge ideal of a bipartite graph has a linear resolution if and only if it does not have any induced subgraphs consisting of two disjoint edges, see for example \cite[Section 6]{Nematbakhsh-02}.
In analogy to this result we have the following characterization for edge ideals of $d$-partite $d$-uniform clutters with linear resolutions.

\begin{proposition}
\label{pro-linearchar}
Let $C$ be a $d$-partite $d$-uniform clutter with a $d$-partition $V_1\cup\cdots\cup V_d$.
The edge ideal $I(C)$ has a linear resolution if and only if
for any integer $2\leq d' \leq d$ and any two edges $e$ and $e'$ of $C(V_1,\ldots,V_d)$,
neither the induced clutter on $e\cup e'$ nor its $d$-partite complement has any ranked projection of rank $d'$ consisting of two disjoint edges.
\end{proposition}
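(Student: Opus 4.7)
The plan is to reduce the characterization to the $(\star_d)$-property of Favacchio--Guardo--Migliore, using the correspondence between finite sets of points in $(\PP^1)^{\times d}$ and $d$-partite $d$-uniform clutters developed above. First I would fix any finite set $X \subset (\PP^1)^{\times d}$ whose associated clutter is $C$. By the discussion preceding the statement combined with the Eagon--Reiner theorem, $I(C)$ has a linear resolution if and only if $R/I_X$ is Cohen--Macaulay, equivalently (by the main result of Favacchio--Guardo--Migliore recalled above) $X$ satisfies $(\star_d)$. I would begin by observing that $(\star_d)$ depends only on $C$: a point of $(\PP^1)^{\times d}$ using a hyperplane not appearing among the defining hyperplanes of $X$ forces $X \cap Y_{P,Q} = \emptyset$, so the only relevant $P, Q$ are those whose coordinates are hyperplanes of $X$, i.e., whose associated edges $e_P, e_Q$ lie in $C(V_1,\ldots,V_d)$.

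Next I would translate $(\star_d)$ into clutter language. A point $P$ using only $X$'s hyperplanes corresponds to an edge $e_P$ of $C(V_1,\ldots,V_d)$ with $P \in X$ iff $e_P \in E(C)$; two points $P, Q$ differ in exactly $d'$ coordinates iff $|V_i \cap (e_P \cup e_Q)| = 2$ for exactly $d'$ indices; the $2^{d'}$ points of $Y_{P,Q}$ are in bijection with the edges of the complete $d$-partite clutter on $e_P \cup e_Q$; and $X \cap Y_{P,Q}$ matches $E(C_{|e_P \cup e_Q})$. Under this dictionary, failure of $(\star_d)$ at level $d'$ becomes the existence of either (1) two edges $e, e' \in E(C)$ differing in exactly $d'$ coordinates with $E(C_{|e \cup e'}) = \{e, e'\}$, or (2) two edges $e, e'$ of $C(V_1,\ldots,V_d)$ not in $E(C)$, differing in exactly $d'$ coordinates, with $E((C_{|e \cup e'})^c) = \{e, e'\}$.

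It remains to match (1)--(2) with the negation of the proposition's condition. One direction is immediate: if (1) holds, taking $J$ to be the set of differing coordinates shows the rank-$d'$ projection of $C_{|e \cup e'}$ consists of exactly the two disjoint edges $e \cap W$ and $e' \cap W$; similarly for (2) and the complement. For the converse, assume condition (a) of the proposition fails for some $e, e', J$ with $|J| = d'$; let $g_1, g_2$ be the two disjoint edges of the rank-$d'$ projection and $F_i$ the preimage of $g_i$ in $E(C_{|e \cup e'})$. Every pair $(f, f') \in F_1 \times F_2$ differs in all coordinates of $J$, hence in at least $d'$ coordinates. I would then pick such a pair minimizing the total number $d''$ of differing coordinates.

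The main obstacle will be a short minimality argument establishing that the chosen pair witnesses (1). If some $h \in E(C_{|f \cup f'}) \setminus \{f, f'\}$ existed, then $h \in F_1 \cup F_2$; say $h \in F_1$, so $h$ agrees with $f$ on all $J$-coordinates and agrees with $f'$ on some non-empty subset of the non-$J$ coordinates where $f, f'$ differ (otherwise $h = f$). Replacing $f$ by $h$ in the pair then strictly decreases $d''$, contradicting minimality. Hence $E(C_{|f \cup f'}) = \{f, f'\}$, so $(\star_d)$ fails in case (1) at level $d'' \geq d' \geq 2$. The symmetric argument applied to $(C_{|e \cup e'})^c$ handles condition (b) and yields failure of $(\star_d)$ case (2). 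Combining this with the preceding steps gives the equivalence stated by the proposition.
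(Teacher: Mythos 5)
Your proof follows the same overall route as the paper: fix a set of points $X\subset(\PP^1)^{\times d}$ with associated clutter $C$, invoke the Eagon--Reiner theorem and the Favacchio--Guardo--Migliore characterization to replace ``$I(C)$ has a linear resolution'' with ``$X$ satisfies $(\star_d)$,'' and then translate $(\star_d)$ into the clutter language of induced subclutters and ranked projections. The difference is in the level of rigor. The paper's proof treats the translation as a one-line dictionary (``The first condition of $(\star_d)$ states that there is a subset $J\subseteq[d]$ of cardinality $d'$ such that the projection to $W$ consists of two disjoint edges''), but this equivalence is only immediate in one direction. You correctly isolate the nontrivial direction: a rank-$d'$ projection of $C_{|e\cup e'}$ being two disjoint edges $g_1,g_2$ does \emph{not} directly say that $E(C_{|e\cup e'})=\{e,e'\}$, since many edges may lie over each $g_i$. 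Your minimality argument over pairs $(f,f')\in F_1\times F_2$ fills this gap cleanly: any $h\in E(C_{|f\cup f'})\setminus\{f,f'\}$ projects to $g_1$ or $g_2$, hence agrees with one of $f,f'$ on all of $J$ and with the other on a nonempty set of non-$J$ disagreement coordinates, so swapping it in strictly reduces the disagreement count, a contradiction; the surviving pair $(f,f')$ then witnesses a $(\star_d)$-violation at the correct level $d''$ with $d'\le d''\le d$, and the symmetric argument on $(C_{|e\cup e'})^c$ handles the second $(\star_d)$-condition. So your proposal is not a different proof strategy, but it makes explicit a step the paper's proof passes over, and it is correct.
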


\begin{proof}
Let $P$ and $Q$ be the corresponding points in $(\PP^1)^{\times d}$ to $e$ and $e'$ respectively.
Note that the corresponding clutter to $Y_{P,Q}$ is the complete clutter $C(V_1\cap E,\ldots,V_d\cap E)$ where $E=e\cup e'$.
The first condition of $(\star_d)$-property states that there is a subset $J\subseteq [d]$ of cardinality $d'$ such that the projection to $W=\cup_{i\in J} V_i$ consists of two disjoint edges.
The second condition says that its complement has such a projection.
Therefore, $I(C)$ has a linear resolution if and only if none of the above happens.
This completes the proof. 
\end{proof}

In particular, if the edge ideal $I(C)$ has a linear resolution then the edge ideal of any ranked projection of $C$ has a linear resolution as well.
The following corollary is an easy but nontrivial consequence of Proposition \ref{pro-linearchar}.

\begin{corollary}
Let $C$ be a $d$-partite $d$-uniform clutter. The ideal $I(C)$ has a linear resolution if and only if $I(C^c)$ has a linear resolution.
\end{corollary}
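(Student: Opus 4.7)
The plan is to deduce the corollary directly from Proposition \ref{pro-linearchar} by observing that its characterization is manifestly invariant under the involution $C \mapsto C^c$. Thus no new combinatorics is needed; the task reduces to verifying that the conditions appearing in the proposition are literally the same for $C$ and for its $d$-partite complement.

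The key combinatorial identity to establish is that restriction to a subset of vertices commutes with $d$-partite complementation. Concretely, for any two edges $e, e'$ of the complete $d$-partite clutter $C(V_1,\ldots,V_d)$, equip $e \cup e'$ with its induced $d$-partition $(V_i \cap (e\cup e'))_{i=1}^d$ and check that
\[
(C^c)_{|e\cup e'} \;=\; (C_{|e\cup e'})^c.
\]
This is immediate from unwinding definitions: a subset $f \subseteq e\cup e'$ lies in $(C^c)_{|e\cup e'}$ precisely when $f$ is a transversal of the induced $d$-partition on $e\cup e'$ with $f \notin E(C)$, and this is exactly the defining condition for membership in $(C_{|e\cup e'})^c$.

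Combining this identity with the tautology $(C^c)^c = C$, the unordered pair $\{C_{|e\cup e'},\,(C_{|e\cup e'})^c\}$ coincides with $\{(C^c)_{|e\cup e'},\,((C^c)_{|e\cup e'})^c\}$. Hence the condition in Proposition \ref{pro-linearchar}—that for every $2 \le d' \le d$ and every pair $e, e'$ of edges of $C(V_1,\ldots,V_d)$, \emph{neither} clutter in this pair admits a ranked projection of rank $d'$ consisting of two disjoint edges—is symmetric in $C$ and $C^c$. Applying Proposition \ref{pro-linearchar} to both sides yields the equivalence $I(C)$ has a linear resolution $\iff$ $I(C^c)$ has a linear resolution.

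I do not expect any serious obstacle: the only step that calls for care is the definitional identity $(C^c)_{|e\cup e'} = (C_{|e\cup e'})^c$, and even there one has to be mindful only that the $d$-partite complement on the right is taken within the induced $d$-partition of $e \cup e'$ (not within the original $V_1 \cup \cdots \cup V_d$), so that transversals of the small $d$-partition are compared with transversals of the same small $d$-partition.
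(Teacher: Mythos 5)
Your argument is correct and is precisely the reasoning the paper leaves implicit when it calls the corollary ``an easy but nontrivial consequence'' of Proposition~\ref{pro-linearchar}: the identity $(C^c)_{|e\cup e'} = (C_{|e\cup e'})^c$, together with $(C^c)^c = C$, makes the characterizing condition manifestly symmetric in $C$ and $C^c$. You correctly flag the one point requiring care (the complement on the right being taken in the induced $d$-partition on $e\cup e'$), and your verification of the identity is sound.
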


Let $I \subseteq \kk[V]$ be a squarefree monomial ideal with minimal generators in a single degree $d$.
Let $\Delta$ be its Stanley-Reisner complex.
By Hochster formula, the ideal $I$ has a linear resolution if and only if for all $W\subseteq V$, $\H_i(\Delta_{|W},\kk) = 0$ when $i\neq d-2$. Note that for $i<d-2$, $\H_i(\Delta_{|W},k)$ vanishes.
Therefore, $I$ has a linear resolution if and only if for all $i>d-2$, $\H_i(\Delta_{|W},\kk) = 0$.
The content of Theorem \ref{pro-linearchar} implies that we can restrict our attention only to subsets $W$ of the form $W= \supp(\lcm (f,g))$ for $f,g\in G(I)$.

It is worth mentioning that if the edge ideal of a $d$-partite $d$-uniform clutter $C$ has a linear resolution then any ranked projection of $C$ or $C^c$ of rank greater or equal to $2$ does not have any induced subclutters consisting of two disjoint edges. The converse is not true.

\begin{example}
Let $C$ be a clutter with the following edge set
\[
E(C) =\{ \{a_1,b_1,c_2\}, \{a_2,b_2,c_2\},\{a_2,b_1,c_1\}\}.
\]
The clutter $C$ is a $3$-partite $3$-uniform clutter with $3$-partition 
\[
\{a_1,a_2\}\cup\{b_1,b_2\}\cup\{c_1,c_2\}.
\]
The edge ideal of $C$ does not have a linear resolution.
The Betti table of $I(C)$ is
\[\beta(I(C))=
\begin{pmatrix}
3&\text{.}&\text{.}\\
\text{.}&3&1\\
\end{pmatrix}
\]
Obviously, $C$ does not have any induced subclutter consisting of two disjoint edges.
For $W_1=\{a_1,a_2,b_1,b_2\}, W_2=\{a_1,a_2,c_1,c_2\}$ and $W_3=\{b_1,b_2,c_1,c_2\}$, the ranked projection clutters $C^{W_1},C^{W_2}$ and $C^{W_3}$ are Ferrers graphs. Hence it has no pairs of disjoint edges as induced subgraph. One can show that the same is also true for $C^c$.
\end{example}

\begin{remark}
A well-known classification of arithmetically Cohen-Macaulay sets of points in $\PP^1\times\PP^1$ says that a finite set $X\subseteq \PP^1\times \PP^1$ is arithmetically Cohen-Macaulay if and only if it has the inclusion property with respect to either of the projection maps, see \cite[Theorem 4.11]{Guardo-VanTuyl-01}.
Finite sets of points in $\PP^1\times \PP^1$ correspond to bipartite graphs.
In \cite{Corso-Nagel-01} it is shown that the edge ideal of a bipartite graph has a linear resolution if and only if it is a Ferrers graph.
By the discussions above, a finite set of points $X\subseteq \PP^1\times\PP^1$ is arithmetically Cohen-Macaulay if and only if the edge ideal of the corresponding graph has a linear resolution.
One can easily show that $X$ has the inclusion property if and only if the corresponding graph is a Ferrers graph.
Therefore, surprisingly both of the results in \cite{Corso-Nagel-01} and \cite{Guardo-VanTuyl-01} actually prove the same thing but in completely different languages.
\end{remark}

\bibliographystyle{plain}
\bibliography{MyBib}

{\bf Address:} School of Mathematics, Institute for Research in Fundamental Sciences (IPM), P. O. Box: 19395-5746, Tehran, Iran

{\bf Email:} nematbakhsh@ipm.ir

\end{document}